\newtheorem{theorem}{Theorem}[section]
\newtheorem{definition}[theorem]{Definition}
\newtheorem{example}[theorem]{Example}
\newtheorem{lemma}[theorem]{Lemma}
\newtheorem{proposition}[theorem]{Proposition}
\newtheorem{remark}[theorem]{Remark}
\newcommand{\RCD}{\mathsf{RCD}}
\newcommand{\R}{\mathbb{R}}
\newcommand{\metric}{\langle \, , \, \rangle}
\newcommand{\disp}{\displaystyle}
\newcommand{\eps}{\varepsilon}
\newcommand{\Sph}{\mathbb{S}}
\newcommand{\di}{\mathrm{d}}
\newcommand{\Ric}{\mathrm{Ric}}
\newcommand{\Sec}{\mathrm{Sec}}
\newcommand{\lip}{\mathrm{Lip}}
\newcommand{\loc}{\mathrm{loc}}
\newcommand{\LL}{\mathscr{L}}
\DeclareMathOperator{\diver}{div\,}
\DeclareMathOperator{\dist}{\di}
\begin{document}

\author{Dami\~ao J. Ara\'ujo \and Marco Magliaro \and Luciano Mari \and Leandro F. Pessoa}
\title{\textbf{On splitting complete manifolds via infinity harmonic functions}}
\date{}
\maketitle
\scriptsize \begin{center} Departamento de Matem\'atica, Universidade Federal da Para\'iba\\
58059-900, Jo\~ao Pessoa - Para\'iba (Brazil)\\
E-mail: araujo@mat.ufpb.br
\end{center}

\scriptsize \begin{center} Departamento de Matem\'{a}tica, Universidade Federal do Cear\'a\\
60455-760, Fortaleza, (Brazil)\\
E-mail: marco.magliaro@mat.ufc.br
\end{center}

\scriptsize \begin{center} Dipartimento di Matematica, Universit\`a degli studi di Torino,\\
Via Carlo Alberto 10, 10123 Torino (Italy)\\
E-mail: luciano.mari@unito.it
\end{center}

\scriptsize \begin{center} Departamento de Matem\'{a}tica, Universidade Federal do Piau\'{i},\\
64049-550, Teresina (Brazil)\\
E-mail: leandropessoa@ufpi.edu.br
\end{center}

\normalsize

\maketitle

\begin{abstract}
In this paper, we prove some splitting results for manifolds supporting a non-constant infinity harmonic function which has at most linear growth on one side. Manifolds with non-negative Ricci or sectional curvature are considered. In dimension $2$, we extend Savin's theorem on Lipschitz infinity harmonic functions in the plane to every surface with non-negative sectional curvature.
\end{abstract}

\section{Introduction}
The present paper regards the interplay between the geometry of a Riemannian manifold and the qualitative properties of $\infty$-harmonic functions, i.e., solutions to 
	\[
	\Delta_\infty u \doteq \nabla^2 u(\nabla u, \nabla u) = 0 \quad \text{on } \, M
	\]
in the viscosity sense. The $\infty$-Laplace operator and its normalized counterpart
	\[
	\Delta^N_\infty u \doteq \nabla^2 u\left(\frac{\nabla u}{|\nabla u|}, \frac{\nabla u}{|\nabla u|}\right)
	\]
gained increasing importance in the field of fully-nonlinear PDEs over the past 60 years, see \cite{aronsson_crandall_juutinen,crandall_visit} for a thorough account of the theory, with historical insights and a detailed set of references. The investigation herein is a natural continuation of \cite{amp,maripessoa,maripessoa_2}, where the geodesic completeness of a boundaryless Riemannian (or Finsler) manifold was characterized in terms of suitable Liouville properties of viscosity solutions to
\[
\Delta^N_\infty u \ge g(u).
\]
It is known that $u$ solves $\Delta_\infty u \ge 0$ ($=0$, $\le 0$) if and only if it solves $\Delta_\infty^N u \ge 0$ ($=0$, $\le 0$). Therefore, for the purpose of the present paper we will only consider $\Delta_\infty$. Among the various equivalent conditions, by \cite[Theorem 1.1]{amp} (cf. also \cite[Theorem 8.1]{maripessoa}) a connected Riemannian manifold $M$ without boundary is shown to be complete \emph{if and only if} all solutions to $\Delta_\infty u \le 0$ whose negative part $u_-$ satisfies 
\begin{equation}\label{eq_slowerlin}
u_-(x) = o\big(r(x)\big) \qquad \text{as } \, x \to \infty
\end{equation}
are constant\footnote{The implication is $(1) \Leftrightarrow (2)$ in \cite[Theorem 1.1]{amp}, once we observe that $v = -u$ solves $\Delta_\infty v \ge 0$ (hence $\Delta^N_\infty v \ge 0$) with $v_+ = o(r)$.}. Here, $r(x)$ denotes the distance to a fixed origin. The result extends the known Liouville theorem for positive $\infty$-superharmonic functions on $\R^m$ proved by Lindqvist and Manfredi in \cite{lindqvistmanfredi, lindqvistmanfredi_2}, see also \cite[p.113]{crandall_visit}, and we stress that the \emph{if} part is its main novelty. The lack of curvature or volume growth requirements on $M$ in order for the aforementioned Liouville property to hold makes the theory of slowly growing $\infty$-harmonic functions considerably different from that developed for other operators $\mathcal{F}$ like the Laplacian \cite{cheng,yau}, the $p$-Laplacian \cite{wangzhang} and in recent years the minimal hypersurface operator \cite{cmmr,ding,rosenbergschulzespruck}. In these latter cases, $\Ric \ge 0$ is the weakest known condition to guarantee that positive solutions to $\mathcal{F}[u]=0$ are constant. For solutions satisfying the more general \eqref{eq_slowerlin}, in the minimal hypersurface case further technical conditions on $M$ are needed as of yet, see \cite{cgmr,ding_new}.

Hereafter, $M$ will always denote a complete, connected Riemannian manifold without boundary. A natural problem is then to see what happens to $\infty$-harmonic functions that grow at most linearly on one side, namely, that satisfy
	\begin{equation}\label{eq_noti}
	\limsup_{r(q)\to \infty} \frac{u(q)}{r(q)} < \infty.
	\end{equation}
Especially, we shall look for geometric conditions to force a rigidity of $M$ or $u$, in the sense that $M$ splits as a (possibly warped) product and $u$ only depends on split-off variables. The next example shows that a constraint on the geometry of $M$ is necessary in this case.

\begin{example}
\emph{On a Cartan-Hadamard manifold, that is, a simply connected manifold with nonpositive sectional curvature $\Sec$, given a ray $\gamma : [0,\infty) \to M$ one can consider the Busemann function
	\[
	b_\gamma : M \to \R, \qquad b_\gamma(x) = \lim_{t \to \infty} \Big( \di(x,\gamma(t)) - t\Big). 
	\]
It is known by \cite{heintze-hof} that $b_\gamma \in C^2(M)$ and $|\nabla b_\gamma| = 1$ on $M$, so differentiating we get that $b_\gamma$ is a globally Lipschitz solution to $\Delta_\infty b_\gamma = 0$ on $M$. However, in general $M$ does not split off any line.
}
\end{example}
\begin{remark}\label{rem_limit_1}
\emph{It is known, see Lemma \ref{lip_const_S_R} below, that for solutions to $\Delta_\infty u \ge 0$ the following identity holds (possibly with infinite values):
	\[
	\limsup_{r(q)\to \infty} \frac{u(q)}{r(q)} = \lip(u,M),
	\]
with $\lip(u,M)$ the Lipschitz constant of $u$ on $M$. Therefore, non-constant globally Lipschitz solutions to $\Delta_\infty u = 0$ are precisely those for which the limsup in \eqref{eq_noti} is a positive real number. By scaling $u$, in our main results we shall assume this number to be one.  
}
\end{remark}
Based on the theory of harmonic functions with linear growth developed in \cite{ccm,kasue, litam} and the corresponding results for minimal graphs which appeared in recent years \cite{cgmr,ding_new,dingjostxin}, the assumptions
	\[
	\Ric \ge 0 \qquad \text{or} \qquad \Sec \ge 0
	\] 
seem to be appropriate. For Euclidean space, Aronsson in \cite[Section 7]{aronsson2} proved that any solution of class $C^2$ on $\R^2$ is affine, see also \cite{evans} for the case of dimension $m \ge 3$. Examples therein show that this fails for viscosity solutions which are not $C^2$, unless one assumes a priori growth of $u$. On the other hand, Savin's remarkable theorem \cite{savin} states that  
	\[
	\Delta_\infty u = 0 \ \ \ \text{on } \, \R^2, \ \ \ \text{$u$ \ Lipschitz} \quad \Longrightarrow \quad \text{$u$ \ is\ affine}.
	\] 
As of today, its extension to $\R^m$ for $m \ge 3$ has not been established. In higher dimensions, we are only aware of the next half-space theorem showed by Crandall, Evans and Gariepy \cite{CEG}:
	\[
	\Delta_\infty u \le 0, \ \ \  u(x) \ge a + \langle p, x \rangle \ \text{ on } \, \R^m \quad \Longrightarrow \quad u = u(0) + \langle p, x \rangle ,
	\]
and the recent work of Hong and Zhao \cite{hongzhao}, who proved that $u$ is affine by assuming \eqref{eq_noti} and 
	\[
	\lim_{r(p) \to \infty} |Du(p)| = \lip(u,\R^m).
	\]
The methods herein are much inspired by those in \cite{CEG,hongzhao}. As a matter of fact, we show that elaborating on their arguments in a manifold setting, and employing some basic facts of metric geometry, we are able to obtain results with nontrivial geometric content. Let $M$ be complete, connected and without boundary, and assume that $u$ is a nonconstant $\infty$-harmonic function satisfying \eqref{eq_noti}, so by Remark \ref{rem_limit_1} we can assume 
\[
\limsup_{r(q) \to \infty} \frac{u(q)}{r(q)} =1.
\]
We prove:
\begin{itemize}
\item[$(i)$] Theorem \ref{teo_ric}. If $\Ric \ge 0$, then any blowdown $M_\infty$ of $M$ splits as $\R \times N_\infty$. Moreover, the blowdown of $u$ only depends on the arclength $t$ of the $\R$ factor, and it is affine in $t$. 

\item[$(ii)$] Proposition \ref{teo_counter}. In the assumptions of $(i)$, $M$ itself may not split off lines: there exists a manifold $M$ with $\Ric > 0$ carrying a linearly growing $\infty$-harmonic function. However, by the tangency principle in Proposition \ref{prop_SMP}, if the graph of $u$ touches that of a (possibly translated and dilated) Busemann function from above or below, then $M$ splits and $u$ is an affine function of the split direction only.

\item[$(iii)$] If $\Sec \ge 0$, general theory gives a way to split $M$ itself as $\R \times N$. We prove in Theorem \ref{teo_unique_blowdown}. that the blowdown of $u$ is unique and that, writing $(x,y) \in \R \times N$ and orienting $\R$ appropriately, it holds
\[
\lim_{x \to +\infty} \frac{u(x,y)-u(-x,y)}{2x} = 1 \qquad \text{for each fixed } \, y \in N.
\]  
\end{itemize}

In the assumptions of $(iii)$, whether the function $u$ only depends on $x$ is an open problem even in $\R^m$, whose solution would allow to extend Savin's result to higher dimensions. As pointed out in \cite{crandall_visit,CEG}, a positive answer is likely to give new insights on the $C^{1,\alpha}$ regularity property of $\infty$-harmonic functions. In dimension $m \ge 3$, we have the following sufficient condition:
\begin{itemize}
\item[$(iv)$] Assume $\Sec \ge 0$, and that there exist a ray $\gamma$ and a constant $C$ for which either
\[
u\big(\gamma(t)\big) \ge t-C \qquad \text{or} \qquad u\big(\gamma(t)\big) \le - t + C 
\]
holds for all $t \in \R^+$. Then, referring to the splitting in $(iii)$, we have $u(x,y) = x + C_2$ for some constant $C_2$.
\end{itemize}

On the other hand, $(iii)$ strengthens in dimension $2$ and gives rise to a full extension of Savin's theorem to any complete surface with non-negative sectional curvature. We get

\begin{theorem}\label{teo_sec_surface_intro}
Let $M$ be a complete connected surface with $\Sec \geq 0$, and let $u \in C(M)$ be a non-constant $\infty$-harmonic function such that
\begin{eqnarray}
\limsup_{r(q) \to \infty} \frac{u(q)}{r(q)} < \infty,
\end{eqnarray}
where $r$ is the distance from a fixed origin. Then, $M = \R^2$ or $M = \R \times \mathbb{S}^1$. Furthermore, $u$ only depends on the arclength $x$ of a split $\R$ factor, and it is affine in $x$. 
\end{theorem}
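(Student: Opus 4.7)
The plan is to reduce the theorem to Savin's planar Liouville result \cite{savin} by first extracting the splitting of $M$ supplied by item $(iii)$. First, by Remark~\ref{rem_limit_1}, the assumption $\limsup_{r(q) \to \infty} u(q)/r(q) < \infty$ combined with $\Delta_\infty u = 0$ forces $u$ to be globally Lipschitz on $M$, with Lipschitz constant equal to that limsup; after rescaling we may assume this constant is $1$. Next, since $\Sec \geq 0$ and $u$ is a non-constant $\infty$-harmonic function of linear growth, the splitting asserted in item $(iii)$ (and proved in Theorem~\ref{teo_unique_blowdown}) produces an isometric decomposition $M = \R \times N$ with $N$ a complete totally geodesic submanifold. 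Because $\dim M = 2$, the fiber $N$ is $1$-dimensional and complete, hence isometric to $\R$ or to a circle $\Sph^1$ of some length. The two cases give $M = \R^2$ or $M = \R \times \Sph^1$, both flat, settling the topological half of the statement and, in particular, ruling out the Möbius-band alternative available for non-compact surfaces with $\Sec \geq 0$.

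Now I would apply Savin's theorem. When $M = \R^2$, the Lipschitz $\infty$-harmonic function $u$ is affine by \cite{savin}; a rotation of the splitting so that the first factor is parallel to $\nabla u$ makes $u$ a function only of the new $\R$-coordinate, affine in it. When $M = \R \times \Sph^1$, I would pull $u$ back to the universal cover $\R^2$ through the Riemannian covering map. The lift $\tilde u$ is Lipschitz with the same constant and still $\infty$-harmonic (the covering is a local isometry), so Savin's theorem gives $\tilde u(x,y) = \alpha x + \beta y + c$. Invariance of $\tilde u$ under the deck transformation $(x,y) \mapsto (x, y + L)$, where $L$ is the length of the $\Sph^1$ factor, forces $\beta = 0$, and $u$ descends to $u(x,y) = \alpha x + c$ on the cylinder, independent of $y$ and affine in $x$.

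The genuinely hard steps are already done for me: the splitting $M = \R \times N$ is the content of Theorem~\ref{teo_unique_blowdown}, and the planar rigidity of Lipschitz $\infty$-harmonic functions is Savin's deep theorem. What makes the dimension-two case work, as opposed to the open higher-dimensional problem mentioned in item $(iv)$, is that the fiber $N$ is a $1$-manifold: the question of whether $u$ depends only on the $\R$-coordinate reduces to an affine problem on $\R^2$, possibly after unfolding the cylinder, thereby bypassing the obstruction that keeps the higher-dimensional case open. Accordingly, the only step requiring any care in the write-up is the cylinder case, which amounts to the observation that an affine function on $\R^2$ which is periodic in one variable cannot depend on that variable at all.
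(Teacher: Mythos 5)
Your argument is essentially the paper's own: use the $\Sec\ge 0$ splitting $M=\R\times N$ with $N$ one-dimensional (so $N=\R$ or $N=\Sph^1$), apply Savin's theorem on $\R^2$ directly in the first case, and in the cylinder case lift $u$ to the universal cover $\R^2$, apply Savin to conclude the lift is affine, and use periodicity of the deck action to kill the coefficient of the fiber variable. The only cosmetic difference is that you route the splitting through Theorem~\ref{teo_unique_blowdown}, whereas the paper obtains it more directly from Theorem~\ref{teo_ric} and Lemma~\ref{lem_blowgamma} (the uniqueness of blowdown in Theorem~\ref{teo_unique_blowdown} is not needed in dimension two, since Savin already does the heavy lifting).
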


Most of the arguments in the present paper extend, almost directly, to $\RCD$ spaces, for which we refer to the survey \cite{ambrosio} and the references therein. An exception might be the approximation procedure we carried over to prove Theorem \ref{teo_unique_blowdown}, see Section \ref{sec_pLapla}. As a side remark, to approximate we have chosen to use the $p$-Laplacian instead of the inhomogeneous operator proposed in \cite{evsm}. In another direction, Finsler manifolds proved to be a quite natural setting for the techniques developed to investigate the $\infty$-Laplacian, see \cite{amp}. However, in such a generality the topological/geometric conclusions that can be achieved from splitting theorems are weaker, apart from the subclass of Berwald metrics, \cite{ohta}. For these reasons, we decided to stick to the smooth, Riemannian setting to avoid technicalities.

\vspace{0.2cm}

\noindent \textbf{Acknowledgements.} D.J.A. is partially supported by CNPq-Brazil, grant 311138/2019-5, and Para\'iba State Research Foundation (FAPESQ), grant 2019/0014. D.J.A. and L.P. thank the Abdus Salam International Centre for Theoretical Physics (ICTP). M.M. is partially supported by CNPq-Brazil, Grant 401233/2022-7. L.P. is partially supported by Alexander von Humboldt Foundation, Capes-Brazil (Finance Code 001), and by CNPq-Brazil, Grant 306738/2019-8 and is grateful to Professor Alexander Grigor'yan and the Faculty of Mathematics at the Universit\"at Bielefeld for their warm hospitality. L.M. is supported by the PRIN project 20225J97H5 ``Differential-geometric aspects of manifolds via Global Analysis''.

\section{Preliminaries}

%In this section we are going to describe basic properties on metric geometry, comparison with cones and some qualitative results involving Lipschitz and sub/super-solutions of the infinity Laplace operator.

\subsection*{Busemann functions and convergence}

We here collect some basic facts on metric geometry, mostly to fix notation. We refer to \cite{petersen} for more details. Hereafter, a segment $\gamma : [0,T] \to M$ will be a unit speed geodesic which is minimizing between its endpoints. A unit speed geodesic $\gamma$ will be called: 
\begin{itemize}
\item[-] a ray if $\gamma$ is defined on $[0,\infty)$ and is a segment between any pair of its points; 
\item[-] a line if $\gamma$ is defined on $\R$ and is a segment between any pair of its points.
\end{itemize}
Therefore, a line is characterized by the identity 
\[
\di(\gamma(t), \gamma(s)) = |t-s| \qquad \forall \, s,t \in \R.
\]
Given a ray $\gamma$, the Busemann function $b_\gamma : M \to \R$ is defined as the limit
	\[
	b_\gamma(x) = \lim_{t \to \infty} \Big( \di(x,\gamma(t)) - t\Big). 
	\]
Such a limit exists since the family of functions $b_{\gamma,t}(x) = \di(x,\gamma(t)) - t$ is monotone decreasing and bounded as $t \uparrow \infty$, see \cite[Sec. 7.3.2]{petersen}. Given a point $p \in M$, an asymptote of $\gamma$ issuing from $x$ is a sequential limit $\tilde \gamma$ of a sequence of segments $\tilde \gamma_{j}$ joining $x$ to $\gamma(t_j)$ for some $t_j \to \infty$. Notice that $\tilde \gamma$ is a ray from $x$. By \cite[Prop. 7.3.8]{petersen}, it holds
\[
b_\gamma(x) \le b_\gamma(p) + b_{\tilde \gamma}(x)
\]
with equality at $p$, namely, $b_\gamma(p) + b_{\tilde \gamma}$ is a support function from above for $b_\gamma$ at $p$.

Next, we denote with $\lambda = \{\lambda_j\}$ a sequence with $\lambda_j \to \infty$. For each $j$, we let $M^\lambda_j$ be the manifold $M$ with metric $g_j = \lambda_j^{-2}g$, distance $\di_j = \lambda_j^{-1}\di$ and induced volume form $\di V_j = \lambda_j^{-m}\di V$. Also, let $B_R^j$ be geodesic balls in $M^\lambda_j$ centered at a fixed origin $o$. A pointed measured Gromov-Hausdorff limit
\begin{eqnarray}
(M,\di_j,\di V_j,o) \longrightarrow (M^\lambda_{\infty},\di_{\infty},\mathfrak{m}_{\infty},o_{\infty}),
\end{eqnarray}
see \cite[Section 6]{ambrosio} for its definition, will be written as $M_j^\lambda \to M_\infty^\lambda$ and named a tangent cone at infinity (or a blowdown) of $M$ at $o$. Henceforth, given $u : M \to \R$, $\lip(u,U)$ will denote the Lipschitz constant of $u$ on a subset $U \subset M$. Assume that $u$ is globally Lipschitz. Defining  
	\[
	u^\lambda_j: M^\lambda_j \to R, \qquad u^\lambda_j(x) = \frac{u(x)-u(o)}{\lambda_j}, 
	\]
we have $\lip(u^\lambda_j,M^\lambda_j) = \lip(u,M)$ for each $j$ and therefore, up to a subsequence, $u^\lambda_j$ converges pointwise in the Gromov-Hausdorff sense (see \cite[Lem. 11.1.9]{petersen}) to a function $v^\lambda: M_\infty^\lambda \to \R$, meaning that, for each $x_j \in M^\lambda_j$, $x_\infty \in M^\lambda_\infty$, 
\[
q_j \to q_\infty \qquad \Longrightarrow \qquad u_j^\lambda(q_j) \to v^\lambda(q_\infty).
\]

\subsection*{Comparison with cones and its consequences}

We recall some well-known properties of $\infty$-subharmonic functions, which can be found in the surveys \cite{aronsson_crandall_juutinen,crandall_visit,wang_notes}. Despite the references being set in Euclidean space, the proof of the lemmata below carry over verbatim to any complete Riemannian manifold. For more general metric spaces, we refer to \cite{champion_depascale}. 

Let $\Omega$ be an open domain of $M$ and let $u \in C(\Omega)$. It is known that $\Delta_\infty u \ge 0$ is equivalent to $u$ enjoying comparison with cones 
\[
C_x(y) = a + b\, \di(x,y), \qquad a,b \in \R 
\]
from above, meaning that if $u \le C_x$ in $\partial \Omega \cup \{x\}$, then $u \le C_x$ in $\Omega$, see \cite[Section 3]{CEG} and \cite{champion_depascale,crandall_visit}. As a consequence, if $\Delta_\infty u \ge 0$ then  
\[
u(y) \le u(x) + \left(\max_{\partial B_R(x)} \frac{u(y)-u(x)}{R}\right)\di(x,y) \quad \forall \, x \in \Omega, \ y \in B_R(x) \Subset \Omega.
\] 
Even more, by \cite[Lem. 2.4]{CEG} the function
\begin{eqnarray}
R \mapsto S^+_{u,R}(x) = \max_{z \in \partial B_R(x)} \frac{u(z)-u(x)}{R}
\end{eqnarray}
is non-decreasing for $R < \di(x, \partial \Omega)$, and therefore the limits 
\[
S_u^+(x) = \lim_{R \to 0} S^+_{u,R}(x) \qquad \text{and} \qquad S_{u,\infty}^+(x) = \lim_{R \to \infty} S^+_{u,R}(x) 
\]
(the latter, if $\Omega = M$) are well defined. As a direct consequence, $u \in \lip_\loc(\Omega)$, see \cite[Lem. 2.5]{CEG}. The following proposition collects some of the properties in \cite[Lemm. 4.2 and 4.3]{crandall_visit}.

\begin{proposition}\label{prop_lip_point}
Let $\Omega \subset M$ be an open subset and $u \in C(\Omega)$ satisfy $\Delta_\infty u \ge 0$. Then, for each $x \in \Omega$
\[
S_u^+(x) = \lim_{r \to 0} \lip(u, B_r(x)) = \lim_{r \to 0} \|\nabla u\|_{L^\infty(B_r(x))}.
\]
Moreover, if $u$ is differentiable at $x$, the three quantities equal $|\nabla u(x)|$. 
\end{proposition}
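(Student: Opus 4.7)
The plan is to show $S^+_u(x) = L_*(x)$ with $L_*(x) := \lim_{r \to 0^+} \lip(u, B_r(x))$, then identify this common value with the $L^\infty$ gradient limit via Rademacher's theorem, and finally treat the differentiable case. The easy direction $S^+_u(x) \le L_*(x)$ is immediate: on every sphere $\partial B_R(x) \Subset \Omega$, every $w$ gives $(u(w)-u(x))/R = (u(w)-u(x))/\di(x,w) \le \lip(u, \overline{B_R(x)})$, so $S^+_{u,R}(x) \le \lip(u, \overline{B_R(x)})$, and passing to $R \to 0^+$ yields the bound.

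For the reverse inequality I apply comparison with cones from above with a variable base point. For $y, z \in B_r(x)$ with $\overline{B_{\di(y,z)}(y)} \subset \Omega$, the cone centered at $y$ of radius $\di(y,z)$ dominates $u$ on $\partial B_{\di(y,z)}(y) \cup \{y\}$, hence on the ball, giving $u(z)-u(y) \le S^+_{u, \di(y,z)}(y) \di(y,z)$ (extended to $z$ on the sphere by a standard limit). Symmetrizing in $y,z$ and using monotonicity of $S^+_{u,\cdot}(y)$, I obtain, for $r$ small enough that $B_{3r}(x) \Subset \Omega$,
\[
\lip(u, B_r(x)) \le \sup_{y \in B_r(x)} S^+_{u, 2r}(y).
\]

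The delicate point---and the main obstacle---is to prove this supremum converges to $S^+_u(x)$ as $r \to 0^+$. I argue by contradiction. If $L_*(x) > c > S^+_u(x)$, choose $r_j \to 0^+$ and $y_j, z_j \in B_{r_j}(x)$ with $(u(z_j)-u(y_j))/\di(y_j, z_j) > c$, so $S^+_{u, \di(y_j, z_j)}(y_j) > c$. By monotonicity in the radius, the same bound persists at the larger scale $R_j := r_j^{1/2}$ (which dominates $\di(y_j, z_j) \le 2 r_j$ for small $r_j$), providing $w_j \in \partial B_{R_j}(y_j)$ with $u(w_j)-u(y_j) > c R_j$. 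Using the local Lipschitz continuity of $u$ from \cite[Lem. 2.5]{CEG} to bound $u(y_j) \ge u(x) - L r_j$, and $\di(x, w_j) \le r_j + R_j$, I deduce
\[
\frac{u(w_j)-u(x)}{\di(x, w_j)} \ge \frac{c R_j - L r_j}{r_j + R_j} = \frac{c - L r_j^{1/2}}{1 + r_j^{1/2}} \longrightarrow c.
\]
Since $\di(x, w_j) \to 0$, combining $S^+_{u, \di(x, w_j)}(x) \ge (u(w_j)-u(x))/\di(x, w_j)$ with $\lim_{R \to 0^+} S^+_{u, R}(x) = S^+_u(x)$ forces $S^+_u(x) \ge c$, a contradiction. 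The heart of the argument is the two-scale balancing: the choice $R_j/r_j \to \infty$ kills the Lipschitz error $L r_j$, while $R_j \to 0$ keeps $w_j$ shrinking to $x$.

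To finish, I identify the Lipschitz constant with the $L^\infty$ gradient via Rademacher's theorem on Riemannian manifolds: since $u \in \lip_\loc(\Omega)$ is a.e.\ differentiable, on a geodesically convex ball $B_r(x)$ one has $\|\nabla u\|_{L^\infty(B_r(x))} = \lip(u, B_r(x))$, giving the middle equality upon $r \to 0^+$. If $u$ is differentiable at $x$, the expansion $u(y)-u(x) = \langle \nabla u(x), \exp_x^{-1}(y)\rangle + o(\di(x,y))$ produces $S^+_{u, r}(x) \to |\nabla u(x)|$ and $L_*(x) \ge |\nabla u(x)|$, whence $|\nabla u(x)| = S^+_u(x) = L_*(x)$ by the identities already established.
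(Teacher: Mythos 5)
Your proof is correct. The paper does not give its own proof of Proposition~\ref{prop_lip_point}, merely citing \cite[Lemm.~4.2 and 4.3]{crandall_visit}; your argument rebuilds essentially the same ingredients as those lemmas---the trivial bound $S^+_{u,R}(y)\ge (u(z)-u(y))/\di(y,z)$ for $z\in\partial B_{\di(y,z)}(y)$, monotonicity of $R\mapsto S^+_{u,R}$ from comparison with cones, a two-scale contradiction (your $R_j=r_j^{1/2}$) to absorb the local Lipschitz error and reconnect $\lip(u,B_r(x))$ to the sphere slope at $x$, and the standard Rademacher/convexity identity $\lip(u,B)=\|\nabla u\|_{L^\infty(B)}$ on small geodesic balls---so it matches the intended route.
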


%We next recall the following endpoint Lemma in \cite[Lem. 3.3]{CEG}, see also \cite[Lem. 2.13]{wang_notes}, which will be crucial for us:
%
%\begin{lemma}
%Let $u \in C(\Omega)$ solve $\Delta_\infty u \ge 0$, and let $B_r(x) \Subset \Omega$, $x_r \in \partial B_r(x)$ satisfy 
%\[
%S_{u,r}^+(x) = \frac{u(x_r)-u(x)}{r}.
%\]
%Then, for each $R$ such that $B_R(x_r) \Subset \Omega$, it holds $S_{u,R}^+(x_r) \ge S_{u,r}^+(x)$. In particular, $S_u^+(x_r) \ge S_u^+(x)$.
%\end{lemma}

We next state a simple yet very useful consequence of comparison with cones, essentially contained in  \cite[Prop. 7.1]{crandall_visit} \cite[Prop. 1.1]{hongzhao}. We include a proof for the sake of completeness.

\begin{lemma}\label{lip_const_S_R}
If $u \in C(M)$ satisfies $\Delta_{\infty} u \geq 0$, and let $r$ be the distance from a fixed origin $o$. Then, 
\[
\lip(u,M) = S^+_{u,\infty}(x) = \limsup_{r(q) \to \infty} \frac{u(q)}{r(q)},
\]
for any $x \in M$, possibly with infinite values. 
\end{lemma}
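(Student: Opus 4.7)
The plan is to extract everything from the global comparison inequality produced by comparison with cones and the monotonicity of $R \mapsto S^+_{u,R}(x)$ recalled earlier in the section. Writing $S_\infty(x) := S^+_{u,\infty}(x)$, the inequality $u(y) - u(x) \le S^+_{u,R}(x)\, \di(x,y)$ valid for $y \in \overline{B_R(x)}$, together with $S^+_{u,R}(x) \le S_\infty(x)$, gives the global estimate
\[
u(y) - u(x) \le S_\infty(x)\, \di(x,y) \qquad \forall\, x,y \in M. \qquad (\star)
\]
From $(\star)$ (and its swap) one immediately gets $\lip(u,M) \le \sup_x S_\infty(x)$, while the reverse $S_\infty(x) \le \lip(u,M)$ is trivial since any $z \in \partial B_R(x)$ satisfies $u(z)-u(x) \le \lip(u,M)\cdot R$. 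The statement will thus follow once $S_\infty$ is shown to be constant on $M$ and once this constant is identified with the asymptotic growth ratio.

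The key step is the independence of $S_\infty(x)$ on the base point. I would fix $x, y \in M$ and, for each $R > 0$, select a cone maximizer $z_R \in \partial B_R(x)$ with $u(z_R) - u(x) = R\, S^+_{u,R}(x)$. Applying $(\star)$ with base $y$ and invoking the triangle inequality $\di(y,z_R) \le R + \di(x,y)$ gives
\[
u(z_R) - u(y) \le S_\infty(y)\bigl(R + \di(x,y)\bigr).
\]
Splitting $u(z_R) - u(x) = [u(z_R)-u(y)] + [u(y)-u(x)]$, dividing by $R$, and letting $R \to \infty$ yields $S_\infty(x) \le S_\infty(y)$; symmetry then produces the equality. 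Denote the common value by $L \in [0,\infty]$.

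The identification with the limsup along $r(q) \to \infty$ is then bookkeeping. The bound $|u(q) - u(o)| \le L\, r(q)$ coming from $(\star)$ gives $\limsup_{r(q)\to\infty} u(q)/r(q) \le L$, while the same cone maximizers $z_R \in \partial B_R(o)$ used above satisfy $r(z_R) = R \to \infty$ and
\[
\frac{u(z_R)}{r(z_R)} = \frac{u(o)}{R} + S^+_{u,R}(o) \longrightarrow L,
\]
delivering the reverse inequality. All statements remain meaningful if $L = +\infty$, so no separate case analysis is needed.

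The only genuine obstacle is the base-point independence of $S_\infty$; the rest is essentially formal manipulation of $(\star)$ and the monotone limit defining $S_\infty$. I do not anticipate the potential infinite value of $L$ causing any real trouble, since the inequalities produced all respect the $[0,+\infty]$ arithmetic used in the statement.
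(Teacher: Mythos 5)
Your argument is correct and follows essentially the same path as the paper: the key base-point independence of $S^+_{u,\infty}$ comes from decomposing $u(z)-u(x)$ through a second base point, invoking the monotonicity of $R\mapsto S^+_{u,R}$, and using the triangle inequality to compare radii before sending $R\to\infty$, exactly as the paper does. Passing through the explicit cone maximizers $z_R\in\partial B_R(x)$ instead of manipulating the supremum directly, and deriving the global estimate $(\star)$ up front, are minor presentational differences rather than a different route.
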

\begin{proof}
We prove the first equality. From the monotonicity of $S_{u,R}^+(x)$ we deduce that $S^+_{u,R}(x) = \max_{z \in \bar B_R(x)} \frac{u(z)-u(x)}{R}$. Therefore, for each $w \in M$ we get
\begin{eqnarray*}
S^+_{u,R}(x) &\leq  & \max_{\bar B_{R+\di(w,x)}(w)} \frac{u(z)-u(x)}{R} \\[0.2cm]
&=& \max_{\partial B_{R+\di(w,x)}(w)} \left(\frac{u(z)-u(w)}{R+\di(w,x)}\right)\frac{R+\di(w,x)}{R} + \frac{u(w)-u(x)}{R} \\[0.2cm]
& \le & \frac{R+\di(w,x)}{R} S^+_{u,R}(w) + \frac{u(w)-u(x)}{R}.
\end{eqnarray*}
Letting $R \to \infty$ we may conclude $S^+_{u,\infty}(x) \leq S^+_{u,\infty}(w)$. Since $x$ and $w$ are arbitrary, equality holds and the limit $\ell = S^+_{u,\infty}(x)$ (possibly infinite) does not depend on the point $x$. We now show that $\ell = \lip(u,M)$. It is clear that $S^+_{u,R}(x) \leq \lip(u,M)$, thus $\ell \leq \lip(u,M)$. Assume by contradiction that there exists $C \in (\ell, \lip(u,M))$ and pick $z,w \in M$ such that $u(z) \geq u(w) + C\di(w,z)$. Then, 
\[
\ell = S^+_{u,\infty}(w) \geq \frac{u(z)-u(w)}{\di(w,z)} \geq C, 
\]
contradiction. The second equality follows from $S_{u,\infty}^+(x) = S_{u,\infty}^+(o)$ and the definition of $S^+_{u,\infty}(o)$.
\end{proof}

As we shall see, Lemma \ref{lip_const_S_R} guarantees the non-constancy of any blowdown of $u$. Thus, it plays the same important role as that of the relation 
\begin{equation}\label{eq_lim_Du_harm}
\lim_{R \to \infty}  \fint_{B_R} |\nabla u|^2 = \sup_M |\nabla u|^2
\end{equation}
in the theory of harmonic functions \cite{ccm,litam} and minimal graphs \cite{cgmr}. However, we emphasize that the proof of \eqref{eq_lim_Du_harm} in the above references is considerably subtler than that of Lemma \ref{lip_const_S_R}.

%
%\begin{lemma}\label{lem_lipbound}
%Let $M$ be complete and $u \in C(M)$ satisfy $\Delta_\infty u \ge 0$. If 
%\begin{eqnarray}
%\limsup_{r(q) \to \infty} \frac{u(q)}{r(q)} \leq c
%\end{eqnarray} 
%for some $c>0$, then $\lip(u,M) \le c$. 
%\end{lemma}
%
%\begin{proof}
%We fix $x \in M$, $c_1 > c_2 > c$ and let $c_3$ be big enough to satisfy $u(x) \le c_3 + c_2r(x)$ on $M$. We compare $u$ with the cone $C_x(y) = u(x) + c_1\di(x,y)$, so we compute
%\begin{eqnarray*}
%u(y) - C_x(y) &=& u(y) - u(x) - c_1\di(x,y) \leq c_3 + c_2\di(y,o) - u(x) - c_1\di(x,y) \\[0.2cm]
%&\leq & c_3 + c_2\di(x,o) + (c_2-c_1)\di(x,y) - u(x).
%\end{eqnarray*}
%Therefore, for some $R>0$ sufficiently large, if $y \not\in B_R(x)$ then $u(y) \leq C_x(y)$. We apply the comparison with cones in the ball $B_R(x)$ to deduce $u(y) \leq C_x(y)$ for every $y \in M$. Whence, $u(y) \leq u(x) + c_1\di(x,y)$ for each $x,y \in M$. Letting $c_1 \to c$ we conclude $\lip(u,M) \leq c$. 
%\end{proof}

\subsection*{Tightness and the anti-peeling Lemma}

We next present two key lemmata which will be often used in the arguments below. The first one adapts \cite[Lem. 4.2]{CEG}.

\begin{lemma}\label{lem_imposplitting}
Let $(N,\di_N)$ be a metric space and let $v$ be a $1$-Lipschitz function on the product space $\R \times N$ such that, for some $y_0 \in N$, 
\[
v(x,y_0) = x \qquad \forall \, x \in \R.
\]
Then, 
\[
v(x,y) = x \qquad \forall \, (x,y) \in \R \times N.
\]
\end{lemma}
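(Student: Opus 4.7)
The plan is to exploit the $1$-Lipschitz assumption by comparing an arbitrary point $(x,y)$ with the distinguished ``axis'' $\mathbb{R}\times\{y_0\}$, on which $v$ is known explicitly, and then let the comparison point on the axis escape to $\pm\infty$. Fix $(x,y)\in\mathbb{R}\times N$, set $c = \di_N(y,y_0)$, and note that on the product we use the standard Euclidean metric, so $\di((x,y),(t,y_0)) = \sqrt{(t-x)^2 + c^2}$.

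First I would apply the $1$-Lipschitz inequality between $(x,y)$ and $(t,y_0)$ for an arbitrary $t \in \mathbb{R}$. Using the hypothesis $v(t,y_0)=t$, this yields
\[
|v(x,y)-t| \le \sqrt{(t-x)^2+c^2}.
\]
From this single inequality I would extract two one-sided bounds:
\[
t-\sqrt{(t-x)^2+c^2} \le v(x,y) \le t+\sqrt{(t-x)^2+c^2}.
\]

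The key elementary observation is the asymptotic
\[
t-\sqrt{(t-x)^2+c^2} = x - \frac{c^2}{2(t-x)}+o\!\left(\frac{1}{t-x}\right) \longrightarrow x \quad \text{as } t\to +\infty,
\]
obtained by factoring $(t-x)$ out of the square root and Taylor expanding. Letting $t\to+\infty$ in the lower bound therefore gives $v(x,y)\ge x$. Symmetrically, letting $t\to-\infty$ in the upper bound (with the analogous expansion for $t+\sqrt{(t-x)^2+c^2}$) gives $v(x,y)\le x$. Combining, $v(x,y)=x$, which is the claim.

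There is no real obstacle here; the lemma is a soft consequence of the $1$-Lipschitz property and the fact that equality in the Lipschitz inequality along the entire $\mathbb{R}$ fibre forces $v$ to be ``rigidly tied'' to the first coordinate. The only point worth flagging is that the argument uses the Euclidean product metric on $\mathbb{R}\times N$; this is precisely the metric that arises in the splitting theorems invoked later, so no additional care is needed.
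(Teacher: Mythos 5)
Your proof is correct and is essentially the same argument as the paper's: both compare $v(x,y)$ to the known values $v(t,y_0)=t$ on the axis via the $1$-Lipschitz bound and send the comparison point to $\pm\infty$ to obtain the two one-sided inequalities. The only cosmetic difference is that the paper manipulates the squared Lipschitz inequality and divides by $\lambda$, while you Taylor-expand the square root; the limit computation is the same.
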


\begin{proof}
Let us fix $\lambda \in \R$. Since $v$ is $1$-Lipschitz
\begin{eqnarray}
\vert v(x,y) - \lambda \vert^2 = \vert v(x,y) - v(\lambda,y_0)\vert^2 \leq \vert x - \lambda\vert^2 + \di_N(y,y_0)^2.
\end{eqnarray} 
Expanding the squares on both sides and simplifying we get
\begin{eqnarray}
v(x,y)^2 - 2\lambda v(x,y) \leq x^2 - 2\lambda x + \di_N(y,y_0)^2.
\end{eqnarray}
Dividing by $\lambda > 0$ and letting $\lambda \to + \infty$ we obtain $v(x,y) \geq x$. Likewise, dividing by $\lambda < 0$ and letting $\lambda \to - \infty$ we conclude that $v(x,y) \le x$, whence $v(x,y) = x$.
\end{proof}

The second Lemma follows from \cite[Prop. 6.2]{crandall_visit}. We borrowed the name ``anti-peeling Lemma" because of its analogy with \cite[Thm. 3.2]{bartnik_simon}, which is a key result in the theory of the prescribed Lorentzian mean curvature equation.

\begin{lemma}\label{lem_antipeeling}\emph{\textbf{[Anti-peeling Lemma]}}
Let $M$ be a complete Riemannian manifold, $\Omega \subset M$ an open subset, and let $u \colon \Omega \to \R$ satisfy, for some $x \in \Omega$,  
\[
\Delta_\infty u \ge 0 \ \ \text{ on } \, \Omega, \qquad S^+_u(x) = \|\nabla u \|_{L^\infty(\Omega)} = 1. 
\]
Then, there exists a segment $\gamma \colon [0,b) \to \Omega$ issuing from $x$ such that 
\begin{equation}\label{eq_u_slope}
u(\gamma(t)) - u(\gamma(s)) = t-s
\end{equation}
for each $0 < s < t < b$. Moreover, $u$ is differentiable at each point of $\gamma((0,b))$ with gradient $\nabla u(\gamma(t)) = \gamma'(t)$, and if $b< \infty$ it holds 
\[
\lim_{t \to b} \gamma(t) \in \partial \Omega.
\]
In particular, the existence of such $\gamma$ occurs if $\|\nabla u \|_{L^\infty(\Omega)} = 1$ and there exists a geodesic $\bar \gamma \colon [0,b') \to M$ issuing from $x$ where \eqref{eq_u_slope} holds for $0 < s < t < b'$, and in this case $\gamma$ extends $\bar \gamma$. 
\end{lemma}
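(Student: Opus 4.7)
My plan is to construct $\gamma$ by an iterative scheme: first produce a unit-speed segment leaving $x$ along which $u$ grows at unit rate, using the tightness $S^+_u(x)=\|\nabla u\|_{L^\infty(\Omega)}=1$; then show that this tightness propagates to every point of the new segment (via Proposition \ref{prop_lip_point}) so the construction can be restarted at each such point; finally, glue pieces together and take a maximal extension.

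For the initial piece, note that $R\mapsto S^+_{u,R}(x)$ is non-decreasing with $\lim_{R\to 0^+} S^+_{u,R}(x)=S^+_u(x)=1$ by Proposition \ref{prop_lip_point}, while $S^+_{u,R}(x)\le\|\nabla u\|_{L^\infty(\Omega)}=1$. Hence $S^+_{u,R}(x)=1$ for every $R<\di(x,\partial\Omega)$; for each such $R$ there is $z_R\in\partial B_R(x)$ with $u(z_R)-u(x)=R$, and along any minimising unit-speed geodesic $\sigma_R\colon[0,R]\to\Omega$ joining $x$ to $z_R$ the 1-Lipschitz bound forces $u(\sigma_R(t))-u(x)=t$. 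Evaluating $u$ along $\sigma_R$ gives $\lip(u,B_r(q))=1$ for every $q\in\sigma_R((0,R))$ and every admissible $r>0$, so Proposition \ref{prop_lip_point} yields $S^+_u(q)=1$ and the iteration is enabled.

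To realise the maximal segment, define
\[
b:=\sup\big\{T>0:\exists\ \text{segment}\ \gamma\colon[0,T]\to\Omega,\ \gamma(0)=x,\ u(\gamma(t))-u(x)=t\ \forall\, t\in[0,T]\big\},
\]
and apply Arzel\`a--Ascoli to the initial velocities of a maximising sequence, together with smooth dependence of geodesics on initial conditions, to obtain $\gamma\colon[0,b)\to\Omega$ attaining the supremum. Assume $b<\infty$ and that $\gamma(t)$ does not converge to $\partial\Omega$; completeness of $M$ then yields a limit $q_*:=\lim_{t\to b}\gamma(t)\in\Omega$ satisfying $u(q_*)=u(x)+b$ and $S^+_u(q_*)=1$. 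Restarting the initial-piece construction at $q_*$ produces a segment $\tilde\sigma\colon[0,\tau]\to\Omega$, and for $s\in(0,b)$, $t\in(0,\tau)$ the inequalities
\[
s+t=u(\tilde\sigma(t))-u(\gamma(b-s))\le \di(\gamma(b-s),\tilde\sigma(t))\le \di(\gamma(b-s),q_*)+\di(q_*,\tilde\sigma(t))=s+t
\]
force the concatenation $\gamma|_{[b-s,b]}\cup\tilde\sigma|_{[0,t]}$ to be globally minimising and hence, by first variation, a smooth geodesic. This extends $\gamma$ past $b$, contradicting its maximality.

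Differentiability of $u$ at $q=\gamma(t_0)\in\gamma((0,b))$ with $\nabla u(q)=\gamma'(t_0)$ follows by squeezing $u$ between the Lipschitz bounds
\[
u(\gamma(t_0+s))-\di(\gamma(t_0+s),y)\ \le\ u(y)\ \le\ u(\gamma(t_0-s))+\di(\gamma(t_0-s),y),
\]
both agreeing with $u(q)$ at $y=q$; expanding in normal coordinates at $q$ with $s>0$ small but fixed, the quadratic remainder and curvature corrections are absorbed into an $o(|\xi|)$ term, giving $u(\exp_q\xi)=u(q)+\langle\gamma'(t_0),\xi\rangle+o(|\xi|)$. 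The final clause follows by running the iteration with $\bar\gamma$ itself as the initial piece: unit growth of $u$ along $\bar\gamma$ already forces $\lip(u,B_r(x))=1$ and hence $S^+_u(x)=1$ via Proposition \ref{prop_lip_point}, while $\bar\gamma(R)\in\partial B_R(x)$ realises $u(\bar\gamma(R))-u(x)=R$ for every $R<b'$, so each truncation $\bar\gamma|_{[0,R]}$ is an admissible initial piece and the resulting maximal $\gamma$ extends $\bar\gamma$. The main obstacle is the smooth-concatenation step ruling out a finite interior accumulation $q_*$ of $\gamma$: it is there that $u$-tightness, the 1-Lipschitz bound, the triangle inequality and first variation of length must be combined, and this is the essential adaptation required to pass from the Euclidean arguments of \cite{CEG,crandall_visit} to the Riemannian setting.
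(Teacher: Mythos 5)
Your proposal takes a genuinely more self-contained route than the paper's. The paper quotes \cite[Prop.\ 6.2]{crandall_visit} wholesale for the existence of a steepest-ascent curve with $u(\gamma(t))\ge u(x)+tS^+_u(x)$ and boundary exit, then observes it is a segment and invokes \cite[Lem.\ 3.5]{wang_notes} for differentiability. You instead rebuild the curve from scratch out of geodesic segments: monotonicity of $R\mapsto S^+_{u,R}(x)$ plus the pinching $S^+_u(x)=\|\nabla u\|_{L^\infty}=1$ forces local slope-$1$ segments, the tightness propagates along them, and a maximal extension is produced; the boundary/interior dichotomy at $t\to b$ is then resolved by the triangle-inequality concatenation argument. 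This is a correct idea and arguably clarifies what is inside the external citations, at the cost of more bookkeeping. Two points deserve tightening. First, in the Arzel\`a--Ascoli step it is not automatic that the limit geodesic $\gamma=\exp_x(\cdot\,v)$ remains in $\Omega$ on all of $[0,b)$: limits of curves in an open set can touch $\partial\Omega$ at a parameter $\tau<b$. The fix is to take $\tau$ minimal with $\gamma(\tau)\in\partial\Omega$ (or $\tau=b$ if none); the slope-$1$ identity survives on $[0,\tau)$ by uniform convergence, and if $\tau<b$ one has already reached the boundary, so the constructed segment should just be relabelled with $b\leftarrow\tau$ rather than invoking the supremum. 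Second, in the final clause you conclude ``the resulting maximal $\gamma$ extends $\bar\gamma$'' from the admissibility of the truncations $\bar\gamma|_{[0,R]}$; that does not by itself force the constructed $\gamma$ to coincide with $\bar\gamma$, since the initial direction produced by the iteration is not a priori $\bar\gamma'(0)$. The missing step is exactly the one the paper makes explicit: at any interior point $y=\bar\gamma(t_0)$ your differentiability argument gives $\nabla u(y)=\bar\gamma'(t_0)$, and then any slope-$1$ curve through $y$ must have $\gamma'(0)=\nabla u(y)$ because $1=(u\circ\gamma)'(0)=\langle\nabla u(y),\gamma'(0)\rangle\le1$, which pins down $\gamma$ to be a continuation of $\bar\gamma$. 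Once those two points are repaired the argument stands.
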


\begin{proof}
By Proposition \ref{prop_lip_point}, $S^+_u(x)$ coincides with $\lim_{r\to 0} \lip(u,B_r(x))$. It was proved in \cite[Prop. 6.2]{crandall_visit} that there exists a Lipschitz curve $\gamma \colon [0,b) \to \Omega$ of velocity $|\gamma'|\le 1$ issuing from $x$ and satisfying, among other properties,  
\[
u(\gamma(t)) \ge u(x) + t S_u^+(x) = u(x) + t, \qquad \lim_{t \to b}\gamma(t) \in \partial \Omega \ \ \ \text{ if $b$ is finite}.
\]
Since $u$ is $1$-Lipschitz, $u(\gamma(t)) = u(x) + t$ on $[0,b)$ and therefore
\[
|t-s| = |u(\bar \gamma(t))- u(\bar \gamma(s))| \le \di(\gamma(t),\gamma(s)) \le |t-s|,
\]
whence $\gamma$ is a segment. By \cite[Lem. 3.5]{wang_notes}, if the domain of a $1$-Lipschitz function $u$ contains a segment $\gamma$ where $u$ has slope $1$, then $u$ is differentiable at any interior point of $\gamma$. Moreover, its gradient is $\pm \gamma'(t)$ according to whether $u$ grows or decreases along $\gamma$. This concludes the first part of the proof. Next, let $\bar \gamma \colon [0, b') \to \Omega$ be a geodesic from $x$ satisfying \eqref{eq_u_slope}. Using again \cite[Lem. 3.5]{wang_notes} and Proposition \ref{prop_lip_point} we get $S^+_u(y) = 1$ and $\nabla u(y) = \bar \gamma'(t)$ at every interior point $y = \bar \gamma(t)$ of $\bar \gamma$. Applying the first part of the proof, there exists a curve $\gamma$ issuing from $y$ where $u$ has slope $1$. Since $u$ is differentiable at $y$, $1 = (u\circ \gamma)'(0) = \langle \nabla u(y),\gamma'(0) \rangle \le 1$, whence $\bar \gamma'=\gamma'$ at $y$ and $\gamma$ extends $\bar \gamma$.
\end{proof}

\section{Manifolds with $\Ric \geq 0$}

We begin by investigating manifolds with $\Ric \ge 0$. First, we analyse their blowdowns by adapting an argument in \cite[Prop. 7.1]{crandall_visit}, see also Lemma 7.1 therein and \cite[Prop. 1]{hongzhao}.

\begin{theorem}\label{teo_ric}
Let $M^m$ be a complete manifold with $\Ric \geq 0$, and let $u \in C(M)$ be an $\infty$-harmonic function such that
\begin{eqnarray}
\limsup_{r(q) \to \infty} \frac{u(q)}{r(q)} =1, 
\end{eqnarray}
where $r$ is the distance from a fixed origin. Then, every tangent cone at infinity of $M$ splits as $\R\times N_\infty$ for some $N_\infty \in \RCD(0,m-1)$. Furthermore, the blowdown of $u$ only depends on the arclength $\tau$ of the $\R$-factor, and it is affine in $\tau$. 
\end{theorem}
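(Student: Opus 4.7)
The plan is to reduce to the case of $u$ globally $1$-Lipschitz with $u(o)=0$ via Remark~\ref{rem_limit_1} and Lemma~\ref{lip_const_S_R}, set up the blowdown, produce two opposite rays from $o_\infty$ in $M_\infty^\lambda$ along which $u_\infty^\lambda$ is affine of slope $1$, concatenate them into a line, and close with Gigli's $\RCD$ splitting theorem together with Lemma~\ref{lem_imposplitting}. The preliminary observation that makes the whole argument work is that $\Delta_\infty(-u)=-\Delta_\infty u=0$, so Lemma~\ref{lip_const_S_R} applies also to $-u$ and forces
\[
\liminf_{r(q)\to\infty}\frac{u(q)}{r(q)} \;=\; -\lip(-u,M) \;=\; -1,
\]
giving the two-sided linear growth that is invisible from the one-sided hypothesis.

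Fix a sequence $\lambda_j\to\infty$. Under $\Ric\ge 0$, Bishop--Gromov volume comparison and Gromov's compactness theorem produce, along a subsequence, a pointed measured Gromov--Hausdorff limit $M_j^\lambda\to M_\infty^\lambda$; stability of synthetic Ricci lower bounds places $M_\infty^\lambda$ in $\RCD(0,m)$. Since the $u_j^\lambda$ are $1$-Lipschitz with $u_j^\lambda(o)=0$, along a further subsequence they converge to a $1$-Lipschitz $u_\infty^\lambda\colon M_\infty^\lambda\to\R$ with $u_\infty^\lambda(o_\infty)=0$.

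For the ray construction, fix $R>0$: by Lemma~\ref{lip_const_S_R}, $S^+_{u,R\lambda_j}(o)\to 1$ as $j\to\infty$, so we may pick $y_j^R\in\partial B_{R\lambda_j}(o)$ with $u(y_j^R)\ge R\lambda_j(1-1/j)$. Parameterizing a minimizing geodesic from $o$ to $y_j^R$ by arclength in the rescaled metric gives a segment $\tilde\gamma_j^R\colon[0,R]\to M_j^\lambda$ along which the $1$-Lipschitz bound squeezes
\[
s - R/j \;\le\; u_j^\lambda\bigl(\tilde\gamma_j^R(s)\bigr) \;\le\; s, \qquad s\in[0,R].
\]
A pointed-GH subsequential limit followed by a Cantor diagonalization in $R\to\infty$ yields a ray $\gamma^+\colon[0,\infty)\to M_\infty^\lambda$ from $o_\infty$ with $u_\infty^\lambda(\gamma^+(t))=t$. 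Running the same construction with $-u$ in place of $u$ and exploiting $\liminf u/r=-1$ produces a second ray $\gamma^-\colon[0,\infty)\to M_\infty^\lambda$ from $o_\infty$ with $u_\infty^\lambda(\gamma^-(t))=-t$. The $1$-Lipschitzness of $u_\infty^\lambda$ and the triangle inequality through $o_\infty$ then pinch
\[
s+t \;=\; \bigl|u_\infty^\lambda(\gamma^+(t))-u_\infty^\lambda(\gamma^-(s))\bigr| \;\le\; \di_\infty(\gamma^-(s),\gamma^+(t)) \;\le\; s+t,
\]
so the concatenation $\gamma\colon\R\to M_\infty^\lambda$ defined by $\gamma(t)=\gamma^+(t)$ for $t\ge 0$ and $\gamma(t)=\gamma^-(-t)$ for $t\le 0$ is a complete line.

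Finally, Gigli's splitting theorem for $\RCD(0,N)$ spaces containing a line provides an isomorphism $M_\infty^\lambda\cong\R\times N_\infty$ with $N_\infty\in\RCD(0,m-1)$, under which $\gamma$ is realized as $\tau\mapsto(\tau,y_0)$ for some $y_0\in N_\infty$. Since $u_\infty^\lambda(\tau,y_0)=\tau$, Lemma~\ref{lem_imposplitting} upgrades this to $u_\infty^\lambda(\tau,y)=\tau$ for every $(\tau,y)$, yielding both claims of the theorem. The main technical point is the simultaneous construction of the two rays from the same basepoint $o_\infty$: without the symmetry $\liminf u/r=-1$, only a single ray from $o_\infty$ would be available and no line could be assembled without resorting to a secondary blowdown and losing control of which tangent cone is being split.
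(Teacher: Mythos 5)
Your proposal is correct and follows essentially the same route as the paper: both reduce to $\lip(u,M)=1$ via Lemma~\ref{lip_const_S_R}, use the symmetry $\Delta_\infty(-u)=0$ to extract points on $\partial B_{\lambda_j R}(o)$ realizing slopes $\pm 1$, pass to the blowdown, pinch with the triangle inequality and $1$-Lipschitzness of $v^\lambda$ to obtain a line through $o_\infty$, invoke Gigli's $\RCD$ splitting, and finish with Lemma~\ref{lem_imposplitting}. The only cosmetic difference is that you construct the two rays by a diagonal argument in $R$ inside $M^\lambda_\infty$, whereas the paper builds a segment $\gamma_R$ for each $R$ in $M^\lambda_\infty$ and then lets $R\to\infty$; both yield the required line.
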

\begin{proof}
By Lemma \ref{lip_const_S_R}, $\lip(u,M)=1$. Let $M_j^\lambda \to M_\infty^\lambda$ be a tangent cone at infinity centered at $o \in M$, and let $u_j^\lambda \to v^\lambda$ be the associated blowdown of $u$. We hereafter omit the superscript $\lambda$. Fix $R>0$, and for each $j$ consider a point $z_j^+ \in \partial B_{\lambda_j R}(o) \subset M$ which realizes $S_{\lambda_j R}^+(o)$. By Lemma \ref{lip_const_S_R}, 
\begin{eqnarray}
\frac{u_j(z^+_j)}{R} = \frac{u(z_j^+)-u(o)}{\lambda_j R} \to 1 \qquad \text{as} \ \ j \to \infty.
\end{eqnarray}
Likewise, we can consider $z_j^- \in \partial B_{\lambda_j R}(o) \subset M$ which realizes $S_{\lambda_j R}^-(o)$ and obtain
\[
\frac{u_j(z^-_j)}{R} = \frac{u(z_j^-)-u(o)}{\lambda_j R} \to -1 \qquad \text{as} \ \ j \to \infty.
\]
From $z^\pm_j \in \partial B_R^j(o)$ passing to limits as $j \to \infty$ and using the local uniform convergence of $u_j$, up to subsequences 
\begin{eqnarray}\label{u_segm}
z^\pm_j \to z_R^\pm \in \partial B_R^{\infty}(o_\infty), \qquad v(z_R^+) = R = -v(z_R^-).
\end{eqnarray}
Having set $\gamma^+_R \colon [0,R] \to M_\infty$ (respectively $\gamma^-_R \colon [0,R] \to M_\infty$) a segment from $o$ to $z_R^+$ (resp. from $o$ to $z_R^-$), we can define $\gamma_R \colon [-R,R] \to M_\infty$ as 
\begin{eqnarray}
\gamma_R(t) = 
\begin{cases}
\gamma_R^-(-t) & \text{for} \ \ t \in [-R,0], \\[0.2cm]
\gamma_R^+(t) & \text{for} \ \ t \in [0,R].
\end{cases}
\end{eqnarray}
From \eqref{u_segm} we deduce $\di_\infty (z_R^+,z_R^-) \geq u(z_R^+) - u(z_R^-) = 2R$, so by the triangle inequality $\di_\infty (z_R^+,z_R^-) = 2R$. It follows that $\gamma_R$ is a segment from $z_R^-$ to $z_R^+$, and by \eqref{u_segm} and $\lip(v,M_\infty) \le 1$ we deduce 
	\begin{equation}\label{eq_gammaRt}
	v(\gamma_R(t)) = t \qquad \text{for each } \, t \in [-R,R]. 
	\end{equation}
Letting $R \to \infty$, $\gamma_R$ converges to a line $\gamma_\infty$ in $M_\infty$. Cheeger-Colding's splitting Theorem in \cite[Thm. 6.64]{cheegercolding} guarantees that $M_\infty$ splits as $\R \times N_\infty$. Moreover, as shown by Gigli's nonsmooth splitting Theorem \cite{gigli}, $(N_\infty,\di') \in \RCD(0,m-1)$. Let $(\tau,y) \in \R \times N_\infty$, with $o = (0,o')$. Since $v(\tau,o') = \tau$ for each $\tau \in \R$, the conclusion $v(\tau,y)=\tau$ on $\R \times N_\infty$ then follows from Lemma \ref{lem_imposplitting}. 
\end{proof}

\begin{remark}\label{rem_useful}
\emph{Notice that the identity $v(x,y) = x$ for $(x,y) \in \R \times N_\infty$, together with \eqref{eq_gammaRt}, imply that each $\gamma_R$ is the curve $(t,o')$ for $t \in [-R,R]$. Hence, $\gamma_\infty$ is indeed the extension of each $\gamma_R$ to the entire real line. 
}
\end{remark}
 
As mentioned above, Theorem \ref{teo_ric} is not enough to guarantee that $M$ itself splits off a line. The following counterexample describes a manifold with $\Ric > 0$ (hence, not splitting off lines) and carrying a linearly growing $\infty$-harmonic function. Even more, the example points out that assumption $\Sec \ge 0$ cannot be weakened to the non-negativity of any of the following partial Ricci curvature functions $\Ric^{(\ell)}$ for $\ell \ge 2$:
\begin{definition}\label{def_Ricc_l}
Let $M$ be a manifold of dimension $m \ge 2$. For $\ell \in \{1,\ldots, m-1\}$, the $\ell$-th (normalized) Ricci curvature is the function
$$
v \in T_xM \quad \longmapsto \quad \Ric^{(\ell)}(v) \doteq  \inf_{\footnotesize{\begin{array}{c}
\mathcal{W} \le v^\perp \\
\dim \mathcal{W} = \ell
\end{array}}
} \left( \frac{1}{\ell} \sum_{j=1}^\ell \Sec(v \wedge e_j)\right), 
$$
where $\{e_j\}$ is an orthonormal basis of $\mathcal W$. 
\end{definition}
We recall that $\Ric^{(\ell)}$ interpolates between the sectional and Ricci curvatures, obtained respectively  for $\ell = 1$ and (up to a normalization constant) for $\ell = m-1$. In particular, with our chosen normalization the following implications are immediate:
$$
\Sec \ge \kappa \ \  \Longrightarrow \ \ \Ric^{(\ell-1)} \ge \kappa \ \ \Longrightarrow \ \ \Ric^{(\ell)} \ge \kappa \ \  \Longrightarrow \ \ \Ric \ge (m-1)\kappa.
$$

\begin{proposition}\label{teo_counter}
For $m \ge 4$, there exists a complete manifold $M$ with 
	$$
	\Ric^{(2)} \ge 0, \qquad \Ric > 0, \qquad \text{and} \qquad |\Sec| \le \bar \kappa^2
	$$
for some constant $\bar \kappa>0$, which carries a non-constant linearly growing $\infty$-harmonic function.
\end{proposition}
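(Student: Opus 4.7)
The plan is to produce the counterexample by an explicit construction. The key reduction is that on any Riemannian manifold of the form $M = \R \times N$ with $N$ a compact $(m-1)$-dimensional manifold and $g = dr^2 + g_r$ for a smooth one-parameter family of Riemannian metrics $(g_r)_{r \in \R}$ on $N$, the coordinate function $u(r, y) = r$ is smooth, globally $1$-Lipschitz, satisfies $|\nabla u| \equiv 1$, and hence $\Delta_\infty u = \tfrac12 \nabla u(|\nabla u|^2) \equiv 0$ in the classical sense. By Lemma \ref{lip_const_S_R}, $\limsup_{r(q)\to\infty} u(q)/r(q) = 1$, so $u$ is the desired non-constant linearly growing $\infty$-harmonic function, and the content of the proposition lies entirely in choosing $(N, g_r)$ so that the resulting $(M, g)$ is complete and satisfies $\Ric > 0$, $\Ric^{(2)} \ge 0$, and $|\Sec| \le \bar\kappa^2$.

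Writing $A(r) = \tfrac12 g_r^{-1} \partial_r g_r$ for the shape operator of the leaves $\{r\} \times N$, the Riccati identity $\partial_r A + A^2 + R_{\partial_r} = 0$ yields
\begin{equation*}
\Ric(\partial_r, \partial_r) = -\tr(\partial_r A) - \tr(A^2),
\end{equation*}
and analogous Jacobi-field expressions control the tangential Ricci, the sectional curvatures of the radial, tangential, and mixed $2$-planes, and $\Ric^{(2)}$. The simplest candidates are easily ruled out: a product ($g_r \equiv g_0$) has $\Ric(\partial_r, \partial_r) = 0$, and a warped product $g_r = f(r)^2 g_0$ has $\Ric(\partial_r, \partial_r) = -(m-1)f''/f$, whose strict positivity on all of $\R$ would force $f$ to be strictly concave while remaining positive, an impossibility.

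I would therefore take $N = S^p \times S^q$ with $p, q \ge 1$ and $p + q = m - 1 \ge 3$ (which is where the hypothesis $m \ge 4$ enters), and choose $(g_r)$ among the $SO(p+1)\times SO(q+1)$-invariant metrics, namely a doubly warped product $g = dr^2 + f_1(r)^2 g_{S^p} + f_2(r)^2 g_{S^q}$ with $f_1, f_2 \colon \R \to (0, \infty)$ smooth. The standard doubly-warped formulas give
\begin{equation*}
\Ric(\partial_r, \partial_r) = -p \frac{f_1''}{f_1} - q \frac{f_2''}{f_2}, \qquad K_{\mathrm{mix}} = -\frac{f_1' f_2'}{f_1 f_2},
\end{equation*}
together with analogous expressions for the tangential Ricci, the tangential sectional curvatures inside each spherical factor, and $\Ric^{(2)}$. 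The functions $f_1$ and $f_2$ are then tuned so that $f_1$ is concave on one half-line of $\R$ and $f_2$ on the complementary one, in such a way that the weighted combination $p f_1''/f_1 + q f_2''/f_2$ is strictly negative everywhere, $f_1'$ and $f_2'$ have opposite signs with $|f_i'| < 1$ (making the mixed and tangential sectional curvatures non-negative), and the ratios $|f_i''|/f_i$, $|f_i'|^2/f_i^2$, $|f_1' f_2'|/(f_1 f_2)$ are uniformly bounded. Checking $\Ric^{(2)} \ge 0$ then reduces to showing that the sum of any two sectional curvatures with a common direction is non-negative, verified by inspection of the three types of $2$-planes (radial, purely tangent to a single spherical factor, mixed).

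The main obstacle is the simultaneous realization of these conditions by a single explicit pair $(f_1, f_2)$. Since no positive function on $\R$ can be strictly concave throughout, the concavity regions of $f_1$ and $f_2$ must be arranged on complementary half-lines, and the weights $p, q$ chosen so that the weighted combination $p f_1''/f_1 + q f_2''/f_2$ stays strictly negative even in regions where only one of the two $f_i''$ is negative. At the same time, keeping $\Ric^{(2)}\ge 0$ forces the negative mixed sectional curvature $-f_1' f_2'/(f_1 f_2)$ to be dominated by the positive sectional curvatures in any $2$-plane, and the boundedness of $|\Sec|$ constrains the logarithmic derivatives of $f_1, f_2$. Balancing these competing demands is the delicate ODE-level construction that forms the technical core of the proof.
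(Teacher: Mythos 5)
There is a fatal obstruction at the very start of your reduction, before any of the ODE-level tuning of $f_1,f_2$. You take $u(r,y)=r$ on $M=\R\times N$ with $g=dr^2+g_r$, and you correctly observe $|\nabla u|\equiv 1$, so $\Delta_\infty u=\tfrac12\langle\nabla u,\nabla|\nabla u|^2\rangle=0$. But $|\nabla u|\equiv 1$ forces each integral curve $r\mapsto(r,y_0)$ of $\nabla u=\partial_r$ to be a complete unit-speed geodesic (since $\nabla_{\nabla u}\nabla u=\tfrac12\nabla|\nabla u|^2=0$), and because $u$ is $1$-Lipschitz and changes with slope exactly $1$ along it, this geodesic is minimizing between any two of its points, i.e.\ a \emph{line}. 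If such a manifold satisfied $\Ric\ge 0$, the Cheeger--Gromoll splitting theorem would force $M$ to split isometrically, giving $\Ric(\partial_r,\partial_r)\equiv 0$; a fortiori, $\Ric>0$ is impossible. So no choice of $(N,g_r)$ — doubly warped or otherwise — can achieve $\Ric>0$ in your framework, and the ``delicate ODE-level construction'' you defer to cannot exist.

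The paper sidesteps exactly this trap: it uses a metric of the form $g=f(r)^2\,dt^2+dr^2+\eta(r)^2\,d\theta^2$ on $\R\times\R^+\times\mathbb{S}^{m-2}$ (smoothly closed at $r=0$) and sets $u=t$, so that $|\nabla u|=1/f(r)$ is bounded but \emph{non-constant}. The gradient $\nabla u$ is proportional to the Killing field $\partial_t$, and $|\nabla u|^2=f^{-2}$ depends only on $r$, so $\nabla|\nabla u|^2\perp\nabla u$ and $\Delta_\infty u=0$; yet the $t$-curves are \emph{not} geodesics (where $f'\ne 0$), so no line arises and $\Ric>0$ is not obstructed. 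The curvature inequalities are then verified for an explicit $f,\eta$ borrowed from Kasue--Washio and checked in the cited reference. In short: the right ansatz is an $\infty$-harmonic function of a Killing direction whose gradient norm varies, not an eikonal solution $|\nabla u|\equiv 1$.
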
 

\begin{proof}
We consider the example in \cite[p. 913]{kasuewashio}, along with the observations made in \cite{cgmr}. Fix $\alpha, \beta \in (0,1)$ such that $m-1-\beta > 2+\alpha$, and let $0 < \zeta_1,\zeta_2 \in C^\infty(\R^+)$ satisfy 
	\[
	\zeta_1(t) = \left\{ \begin{array}{ll}
	t & \text{ if } t \in (0,1] \\
	t^{-1-\alpha} & \text{ if } \, t \in [2,\infty), 
	\end{array}\right. \qquad  \zeta_2(t) = \int_t^\infty \zeta_1(s) \di s. 
	\]
Then, for $b,c \in \R^+$, define 
	\[
	\eta(r) = \frac{1}{2} r + \frac{1}{2\zeta_2(0)} \int_0^r \zeta_2(s) \di s, \qquad f(r) = (b+r^2)^{\frac{\beta + 3 - m}{2}} + c.
	\]		
We consider $M \doteq \R \times \R^+ \times \Sph^{m-2}$ with coordinates $(t,r,\theta)$ and metric
	\[
	g = f(r)^2 \di t^2 + \di r^2 + \eta(r)^2\di \theta^2,
	\]
where $\di \theta^2$ is the round metric on $\Sph^{m-2}$. Notice that the choice of $\eta$ implies that $g$ extends smoothly at $r=0$ and that $M$ is complete. It was shown in \cite[Section 9]{cgmr} that $|\Sec|$ is bounded on $M$, and that $\Ric^{(2)} \ge 0$, $\Ric > 0$ on $M$ if $b,c$ are chosen large enough. Given a function $u : M \to \R$ of the coordinate $t$ alone, it holds 
\[
|\nabla u| = \frac{\partial_t u}{f}, \qquad \Delta_\infty u = \frac{(\partial^2_t u)(\partial_t u)^2}{f^4}.
\]
Hence, any affine function $u(t) = at + k$ gives rise to an $\infty$-harmonic function. Also, $|\nabla u| = a/f$ is bounded on $M$ since $f$ is bounded below by a positive constant, thus $u$ has at most linear growth. 
\end{proof} 
  
Despite, in general, $\Ric \ge 0$ does not guarantee the splitting of $M$ when the latter supports a non-constant, linearly growing $\infty$-harmonic function, this happens in some special cases. The next result is a tangency principle between $\infty$-harmonic functions and (rescaled, translated) Busemann functions. 

\begin{proposition}\emph{\textbf{[tangency principle]}}\label{prop_SMP}
Let $M$ be a complete manifold with $\Ric \ge 0$, and let $u  \in C(M)$ satisfy $\Delta_\infty u \ge 0$ on $M$. Let $c>0$ and assume that there exists a ray $\gamma$ such that $u- cb_\gamma$ has a global maximum point. Then, 
\begin{itemize}
\item[(i)] $M$ splits as $\R \times N$, for some complete manifold $N$ with $\Ric_N \ge 0$; 
\item[(ii)] choosing a suitable arclength parameter $x$ of the $\R$-factor, it holds $u(x,y) = cx$ for each $(x,y) \in \R \times N$;
\item[(iii)] $\gamma$ is a half-line of the type $(-\infty,a] \times \{y_0\}$. In particular, $u-cb_\gamma$ is constant on $M$.
\end{itemize}
\end{proposition}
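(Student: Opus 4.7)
The plan is to reduce the tangency of $u$ against $cb_\gamma$ at $p_0$ to a tangency against $cb_{\tilde\gamma}$ with $\tilde\gamma$ an asymptote of $\gamma$ issuing from $p_0$, then to run the anti-peeling Lemma \ref{lem_antipeeling} ``backwards'' through an interior point of $\tilde\gamma$ in order to extend this asymptote into a full line through $p_0$, and finally to invoke Cheeger--Gromoll's splitting theorem. Conclusions $(ii)$ and $(iii)$ will then follow by direct manipulations in the product structure. The main obstacle I anticipate is the backward anti-peeling step, whose success hinges on the interior differentiability of $u$ along $\tilde\gamma$ and on the smoothness of the extending geodesic at $p_0$.

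After subtracting the constant $\max_M(u - cb_\gamma)$ I may assume $u \le cb_\gamma$ on $M$ with equality at $p_0$, so $u(p_0) = cb_\gamma(p_0)$. Let $\tilde\gamma \colon [0,\infty) \to M$ be an asymptote of $\gamma$ issuing from $p_0$. The asymptote inequality recalled in the Preliminaries gives $cb_\gamma \le cb_\gamma(p_0) + cb_{\tilde\gamma}$ with equality at $p_0$, hence $u \le u(p_0) + cb_{\tilde\gamma}$ on $M$ with equality at $p_0$. Along $\tilde\gamma$ this reads $u(\tilde\gamma(t)) - u(p_0) \le -ct$. On the other hand, $u \le cb_\gamma$ together with the $1$-Lipschitzness of $b_\gamma$ shows that $u$ has linear growth of rate at most $c$, so Lemma \ref{lip_const_S_R} forces $\lip(u,M) \le c$; combined with the previous decrease estimate, we obtain $\lip(u,M)=c$ and the equality $u(\tilde\gamma(t)) = u(p_0) - ct$ for every $t \ge 0$.

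I then construct a ray from $p_0$ in the direction opposite to $\tilde\gamma$ along which $u$ grows at rate $c$. Fix $q_0 = \tilde\gamma(t_0)$ with $t_0 > 0$; the arc $s \mapsto \tilde\gamma(t_0 - s)$, $s \in [0,t_0]$, is a geodesic from $q_0$ on which $u$ has slope $+c$. Since $\|\nabla u\|_{L^\infty(M)} = c$ and $M$ has empty boundary, the extension part of Lemma \ref{lem_antipeeling} (applied to $u/c$) supplies a ray $\sigma \colon [0,\infty) \to M$ prolonging this arc, with $u(\sigma(s)) - u(q_0) = cs$. Reparametrising $\bar\sigma(s) := \sigma(t_0 + s)$ produces a ray from $p_0$ on which $u$ grows at rate $c$; smoothness of the geodesic $\sigma$ at its interior point $p_0 = \sigma(t_0)$ forces $\bar\sigma'(0) = -\tilde\gamma'(0)$, so the concatenation $\Gamma := \tilde\gamma \cup \bar\sigma$ is a candidate line through $p_0$. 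It is indeed a line: for all $s,t \ge 0$,
\[
d(\bar\sigma(s), \tilde\gamma(t)) \ge \frac{u(\bar\sigma(s)) - u(\tilde\gamma(t))}{\lip(u,M)} = s+t,
\]
with the reverse inequality furnished by the triangle inequality through $p_0$. Cheeger--Gromoll's splitting theorem then produces an isometric decomposition $M = \R \times N$ with $N$ complete and $\Ric_N \ge 0$, proving $(i)$.

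For $(ii)$ and $(iii)$, parametrise the $\R$-factor by the signed arclength $x$ of $\Gamma$, with $p_0 = (0,y_0)$, so that $u(x,y_0) = u(p_0) + cx$ along $\Gamma$. Since $(u - u(p_0))/c$ is $1$-Lipschitz on $M = \R \times N$ and coincides with $x$ on $\R \times \{y_0\}$, Lemma \ref{lem_imposplitting} yields $u(x,y) = u(p_0) + cx$ on all of $M$; a harmless constant translation of $x$ (still an arclength) then gives $(ii)$. A unit-speed geodesic in the Riemannian product splits as $\gamma(s) = (x(\gamma(0)) + as, \beta(s))$ with $a \in [-1,1]$, $\beta$ a geodesic in $N$, and $a^2 + |\beta'|_N^2 = 1$; the inequality $u \le cb_\gamma$ evaluated along $\gamma$ reads $(a+1)s \le b_\gamma(\gamma(0)) - x(\gamma(0))$ for all $s \ge 0$, which forces $a = -1$ and $\beta \equiv y_0^* := \beta(0)$. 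Hence $\gamma(s) = (x(\gamma(0)) - s, y_0^*)$, a half-line of the required form, and a direct evaluation of the Busemann limit in the product metric yields $b_\gamma(x,y) = x - x(\gamma(0))$, so that $u - cb_\gamma \equiv cx(\gamma(0))$ is constant on $M$.
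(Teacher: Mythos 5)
Your proof is correct and follows essentially the same route as the paper: reduce to the tangency $u \le cb_\gamma$, pass to the support Busemann function $b_\gamma(p_0) + b_{\tilde\gamma}$ of an asymptote $\tilde\gamma$, obtain unit slope of $u/c$ along $\tilde\gamma$, extend $\tilde\gamma$ to a line by the anti-peeling Lemma~\ref{lem_antipeeling}, split via Cheeger--Gromoll, and derive $(ii)$ from Lemma~\ref{lem_imposplitting}. The only deviation is in $(iii)$: you pin down the form of $\gamma$ by a direct coordinate computation with $u \le cb_\gamma$ in the product, whereas the paper invokes the structural fact that $\tilde\gamma$ being an asymptote of $\gamma$ and a split line forces $\gamma$ onto a parallel half-line --- both are valid.
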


\begin{proof}
First, observe that for any fixed $o \in M$
\[
u(x) \le c b_\gamma(x) + \max_M (u-cb_\gamma) \le c(b_\gamma(x_0) + \di(x,o)) + \max_M (u-cb_\gamma),
\]
whence by Lemma \ref{lip_const_S_R} $u$ is $c$-Lipschitz. Let $p$ be a global maximum point of $u-cb_\gamma$. Defining 
\[
\bar u = \frac{1}{c}\big(u- u(p)+ cb_\gamma(p)\big)
\]
then $\bar u$ is $1$-Lipschitz and $\bar u \le b_\gamma$ on $M$, with equality in $p$. We consider an asymptote $\tilde \gamma$ for $\gamma$ at $p$. Then, $b_\gamma(p) + b_{\tilde \gamma}$ is a support function for $b_\gamma$ from above at $p$, which gives
\[
\bar u(\tilde \gamma(t)) \le b_\gamma(p) + b_{\tilde \gamma}(\tilde \gamma(t)) = b_\gamma(p) - t
\]
for $t \ge 0$, with equality at $t=0$. Since $\bar u$ is $1$-Lipschitz, necessarily 
\[
\bar u(\tilde \gamma(t)) = b_\gamma(p) - t
\]
whence $\bar u$ is linear with slope $1$ on $\tilde \gamma$. Since $\Delta \bar u \ge 0$, by the anti-peeling Lemma \ref{lem_antipeeling} $\tilde \gamma$ can be continued to a line $\tilde \gamma \colon \R \to M$ where $\bar u$ has slope $1$. The splitting theorem guarantees that $M$ splits as $\R \times N$ with the product metric $\di \tau^2 + g_N$, $p = (0,y_0) \in \R \times N$ and $\R \times \{y_0\}$ is the line $\tilde \gamma$. This shows $(i)$. Then, 
\[
\bar u(\tau,y_0) = b_\gamma(p) + \tau,
\]
thus by Lemma \ref{lem_imposplitting} we infer $\bar u(\tau,y)= b_\gamma(p) + \tau$ on $M$. Up to choosing $x = \tau + h$ for suitable constant $h$ and recalling the definition of $\bar u$, we get $u(x,y) = cx$ on $M$, which proves $(ii)$. To conclude, observe that since $\tilde \gamma$ is an asymptote of $\gamma$, then necessarily $\gamma$ is of the type $(-\infty,a] \times \{y_1\}$ for some $y_1$. Direct computation of $b_\gamma$ shows that $b_\gamma(\tau,y) = \tau-a$ and thus $\bar u-b_\gamma$, hence $u - cb_\gamma$, is constant.
\end{proof}

\begin{remark}
\emph{Note that the completeness assumption on $M$ is crucial. Indeed, on $\R^m \backslash \{0\}$ the function $-|x|$ is $\infty$-harmonic and $-|x|+x_1$ attains infinitely many maximum points (see also \cite[Exercise 2.9]{crandall_visit}).
}
\end{remark}

\section{Manifolds with $\Sec \geq 0$}

Let $u \in C(M)$ satisfy $\Delta_\infty u = 0$ and 
\begin{eqnarray}
\limsup_{r(q) \to \infty} \frac{u(q)}{r(q)} =1. 
\end{eqnarray}
By general theory, if $\Sec \ge 0$ then blowdowns at any fixed point $o$ are unique (see \cite[Lemma 3.4]{guijarro_kapo}), so we denote by $M_\infty$ the blowdown and write $M_j^\lambda \to M_\infty$. By Theorem \ref{teo_ric}, $\lambda$ induces a splitting $M_\infty=\R \times N_\infty$ along a line $\gamma_\infty^\lambda$ for which the blowdown $v^\lambda$ of $u$ writes as $v^\lambda(x,y) = x$, $o_\infty = (0, o'_\infty)$ and $\gamma^\lambda_\infty(t) = (t,o'_\infty)$. It is well-known that a splitting of $M_\infty$ induces, if $\Sec \ge 0$, a splitting of $M$ itself (see \cite[Thm. 4.6]{abfp} for a proof). For our purposes, it is convenient to include the proof in the following lemma, which regards the behaviour of $u$ along the split off line of $M$.  
\begin{lemma}\label{lem_blowgamma}
If $\Sec \ge 0$, the line $\gamma_\infty^\lambda(t) = (t,o_\infty') \in M_\infty = \R \times N_\infty$ induces a unique line $\gamma^\lambda: \R \to M$ passing through an origin $o \in M$ whose blowdown is $\gamma^\lambda_\infty$. Moreover, 
	\begin{equation}\label{eq_sequential}
	\frac{u(\gamma^\lambda(\lambda_jR))- u(\gamma^\lambda(-\lambda_jR))}{2\lambda_j R} \to 1 \qquad \text{as } \ \ j \to \infty.
	\end{equation}
\end{lemma}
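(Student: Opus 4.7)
My plan is to produce the line $\gamma^\lambda$ by a Toponogov-type lifting of the blowdown splitting from $M_\infty$ to $M$ along the lines of \cite[Thm.~4.6]{abfp}, and then to read off \eqref{eq_sequential} from the uniform convergence $u^\lambda_j \to v^\lambda$ already established in Theorem \ref{teo_ric}. I start from the sequence $z^\pm_j \in \partial B_{\lambda_j R}(o)$ constructed in the proof of Theorem \ref{teo_ric}, for which $(u(z^\pm_j) - u(o))/\lambda_j \to \pm R$. Let $\sigma^\pm_j \colon [0, \lambda_j R] \to M$ be unit speed minimizing segments from $o$ to $z^\pm_j$. Since $u$ is $1$-Lipschitz, $d(z^+_j, z^-_j) \ge u(z^+_j) - u(z^-_j)$, and combined with the triangle inequality $d(z^+_j, z^-_j) \le 2\lambda_j R$ this forces $d(z^+_j, z^-_j) = 2\lambda_j R(1+o(1))$. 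The Euclidean comparison triangle with these side lengths has angle at $o$ tending to $\pi$, so Toponogov's hinge comparison for $\Sec \ge 0$ forces the corresponding angle between $\sigma^+_j$ and $\sigma^-_j$ in $M$ to tend to $\pi$ as well. By compactness of the unit sphere in $T_oM$ I can extract a subsequence along which $(\sigma^\pm_j)'(0) \to v^\pm$, necessarily with $v^- = -v^+$, and each half of the candidate $\gamma^\lambda(t) = \exp_o(tv^+)$ is a pointwise limit on compact intervals of the minimizing segments $\sigma^\pm_j|_{[0,T]}$, hence a ray.

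The next step is to upgrade this pair of rays to a genuine bi-infinite line and, simultaneously, to obtain the compatible splitting $M = \R \times N$ with $\gamma^\lambda = \R \times \{o_N\}$; this is exactly what \cite[Thm.~4.6]{abfp} achieves, and the argument (which combines Busemann functions on $M$ with Cheeger-Gromoll) transfers verbatim to our setting. Once this is in place, the blowdown of $\gamma^\lambda = \R \times \{o_N\}$ tautologically coincides with the $\R$-factor of $M_\infty = \R \times N_\infty$, namely with $\gamma^\lambda_\infty$. In particular $\gamma^\lambda(\pm \lambda_j R) \to (\pm R, o_\infty')$ in the Gromov-Hausdorff sense, and applying the uniform convergence $u^\lambda_j \to v^\lambda$ together with the identity $v^\lambda(x,y) = x$ from Theorem \ref{teo_ric} yields
\[
\frac{u(\gamma^\lambda(\pm \lambda_j R)) - u(o)}{\lambda_j} \;=\; u^\lambda_j\bigl(\gamma^\lambda(\pm \lambda_j R)\bigr) \;\longrightarrow\; v^\lambda(\pm R, o_\infty') \;=\; \pm R,
\]
whence \eqref{eq_sequential} follows by subtracting and dividing by $2R$.

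For uniqueness I plan to exploit the product structure of $M = \R \times N$: any line through $o = (0, o_N)$ is of the form $t \mapsto (at, \beta(t))$ with $|a| \le 1$ and $\beta$ a line in $N$ through $o_N$ of speed $\sqrt{1-a^2}$, whose blowdown reads $s \mapsto (as, \beta_\infty(s\sqrt{1-a^2}))$; equating with $\gamma^\lambda_\infty(s) = (s, o_\infty')$ forces $a = 1$ and hence $\beta \equiv o_N$, which pins down the line uniquely. The hardest part will be promoting the broken geodesic obtained from the $\pi$-angle hinge at $o$ into a true line, since the mere hinge comparison at one point is not enough to rule out a shortcut joining its two extremes; I expect to have to reproduce the splitting-lifting scheme of \cite{abfp} in this step, while all the remaining steps are essentially bookkeeping in Gromov-Hausdorff convergence.
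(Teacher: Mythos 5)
Your overall strategy mirrors the paper's: build $\gamma^\lambda$ from limits of the segments $\sigma^\pm_j$ to $z^\pm_j$, use Toponogov to pin down the geometry, split, and read off \eqref{eq_sequential}. Your uniqueness argument via product coordinates is essentially the paper's argument. But there are two genuine gaps, and the way you propose to handle the first is not how the paper does it.

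First, you correctly flag that the $\pi$-angle hinge at $o$ alone does not make the concatenated geodesic a line, but you then outsource the fix to \cite[Thm.~4.6]{abfp} without working it out. The paper closes this gap internally with a \emph{scale-invariant} Toponogov estimate: for every $0 \le s \le S$ it gets $\di(\gamma_j(-s),\gamma_j(s)) \ge \di(z^-_j,z^+_j)\cdot s/(\lambda_j R) = 2s(1+o_j(1))$, so the excess $\di(\gamma_j(-s),o)+\di(\gamma_j(s),o)-\di(\gamma_j(-s),\gamma_j(s))$ tends to zero uniformly on $[0,S]$, and the limit curve is a line. This is strictly more information than the single $\pi$-angle at $o$, and it is exactly what rules out the ``shortcut'' you worry about. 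Second, and more seriously, the step you call tautological is not: even granting the splitting $M=\R\times N$, you still need to know that its $\R$-fiber through $o$ is the curve whose blowdown is the \emph{given} $\gamma^\lambda_\infty$, i.e.\ that $\di_j(\gamma^\lambda(\pm\lambda_j R), z^\pm_j)\to 0$. Without this the chain $u^\lambda_j(\gamma^\lambda(\pm\lambda_j R))\to v^\lambda(\pm R,o'_\infty)$ is unjustified — you only know that $u^\lambda_j(z^\pm_j)\to\pm R$, and you have not shown $\gamma^\lambda(\pm\lambda_j R)$ to be $o(\lambda_j)$-close to $z^\pm_j$. The paper obtains precisely this via the cosine-law estimate \eqref{eq_nice} applied to the hinge formed by $\gamma^\lambda$ and $\gamma_j^\pm$ at $o$, using that the angle between their initial velocities tends to zero. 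That estimate is also what feeds into the proof of \eqref{eq_sequential} through the error terms $A_\pm$; you would need to supply an equivalent quantitative argument for your proof to close.
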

\begin{proof} 
As we work for fixed $\lambda$, we omit its writing. Fix $R>0$. We refer to the proof of Theorem \ref{teo_ric} for the construction of $\gamma_\infty$ and for notation, so let $z_j^\pm \in M_j$ realize $S^\pm_{\lambda_j R}(o)$, let $z_R^\pm \in M_\infty$ be their limits on $M_\infty$ and let $\gamma_R : [-R,R]\to M_\infty$ be the segment built therein to join $z_R^-$ to $z_R^+$. By Remark \ref{rem_useful}, $z^\pm_R = \gamma_R(\pm R)= \gamma_\infty(\pm R)$. It follows that
	\begin{equation}\label{eq_zjpm}
	 M_j  \ni z_j^\pm  \longrightarrow \gamma_\infty(\pm R) \qquad \text{as } \, j \to \infty,
	\end{equation}
We select segments $\gamma_j^\pm : [0, \lambda_j R] \to M$ joining $o$ to $z_j^\pm$. Up to subsequence, $\gamma_j^\pm \to \gamma^\pm$ for some rays $\gamma^\pm : [0,\infty) \to M$. The concatenation 
	\[
	\gamma_j : = -\gamma_j^- * \gamma_j^+ = \left\{ \begin{array}{ll}
	\gamma_j^-(-t) & \quad \text{for } \, t \in [-\lambda_jR,0), \\[0.2cm] 
	\gamma_j^+(t) & \quad \text{for } \, t \in [0,\lambda_jR], 
	\end{array}\right.
	\]	
locally uniformly converges to $\gamma = -\gamma^- * \gamma^+ : \R \to M$. We prove that $\gamma$ is a line, so fix $S>0$ and $s \le S$. Then, by Toponogov's Theorem,
	\[
	\di(\gamma_j(-s), \gamma_j(s)) \ge \di(z_j^-,z_j^+) \frac{s}{\lambda_j R}.
	\]	
However, $\di(z_j^-,z_j^+) = \lambda_j \di_j(z_j^-,z_j^+) = 2\lambda_j R(1+o_j(1))$, whence
	\[
	\di(\gamma_j(-s), \gamma_j(s)) \ge 2s (1+o_j(1)) .
	\]
Therefore, the excess		
	\[
	0 \le \di(\gamma_j(-s), o) + \di (\gamma_j(s),o) - \di(\gamma_j(-s),\gamma_j(s)) \le 2s o_j(1) \le 2S o_j(1) 
	\]
converges to zero uniformly for $s \in [0,S]$, which proves that $\gamma$ is a line. We next point out that the blowdown of $\gamma$ is exactly $\gamma_\infty$. Applying the cosine law to the hinge $(o,\gamma,\gamma_j)$ and using that the angle 
	\[
	\sphericalangle(\dot\gamma_j^\pm(0),\dot \gamma^\pm(0)) \to 0 \qquad \text{as } \, j \to \infty,
	\]
we deduce
	\begin{equation}\label{eq_nice}
	\di(\gamma(\pm \lambda_jR),z_j^\pm)^2 \le 2(\lambda_jR)^2 - 2(\lambda_jR)^2 \cos \sphericalangle(\dot\gamma_j^\pm(0),\dot \gamma^\pm(0)) = o_j(\lambda_j^2R^2).
	\end{equation}
Rescaling, we get
	\[
	\di_j(\gamma(\pm \lambda_jR),z_j^\pm) \to 0 \qquad \text{as } \, j \to \infty, 
	\]
and by the triangle inequality and \eqref{eq_zjpm} we deduce 
	\[
	\gamma(\pm \lambda_jR) \subset M_j^\lambda \to \gamma_\infty( \pm R).
	\]	
Therefore, the blowdown of $\gamma$ (which is clearly a line in $M_\infty^\lambda$) restricted to $[-R,R]$ is a segment joining $\gamma_\infty(-R)$ to $\gamma_\infty(R)$. Since $\gamma_\infty$ is the only such segment, we conclude from the arbitrariness of $R$ that $\gamma$ blows down to $\gamma_\infty$. If there were a line $\sigma \neq \gamma$ with $\sigma(0)=o$ whose blowdown is $\gamma_\infty$, writing
	\[
	\sigma(s) = (b_1s,\bar \sigma(s)) \in \R \times N \qquad \text{with } \, |b_1| < 1,
	\]
the curve $\bar \sigma : \R \to N$ would be a line in $N$. Since $N$ has non-negative sectional curvature, the splitting theorem would guarantee that
	\[
	M = \R \times \R \times N',  \qquad \text{with } \quad \left\{ \begin{array}{ll}
	\gamma(t) = (t,0,o''), \\
	\sigma(s) = (b_1s, b_2 s, \hat \sigma(s))
	\end{array}\right. 
	\]	
for some $o'' \in N'$, $b_2 \in (0,1)$ and line $\hat \sigma$ in $N'$, which is incompatible with the assumption that $\sigma$ blows down to $\gamma_\infty$.

	To prove \eqref{eq_sequential}, observe that by definition of $z_j^\pm$, 
\begin{equation}\label{eq_u_jpm}
\frac{u_j(z^\pm_j)}{R} = \frac{u(z_j^\pm) - u(o)}{\lambda_jR} \to \pm 1 \qquad \text{as} \ \ j \to +\infty.
\end{equation}
We consider
\begin{equation}\label{eq_difference}
0 \le 1- \frac{u(\gamma(\lambda_jR))- u(\gamma(-\lambda_jR))}{2\lambda_j R} = 1 - \frac{u(z_j^+)- u(z_j^-)}{2\lambda_j R} + A_+ - A_-
\end{equation}
where 
\[
A_\pm = \frac{u(\gamma(\pm \lambda_jR))-u(z_j^\pm)}{2\lambda_j R}.
\]
Using $\lip(u,M) = 1$ and \eqref{eq_nice} we get
\[
|A_\pm| \le \frac{\di(\gamma(\pm \lambda_jR),z_j^\pm)}{2\lambda_j R} \to 0 \qquad \text{as } \, j \to \infty, 
\]
whence letting $j \to \infty$ in \eqref{eq_difference} and using \eqref{eq_u_jpm} we conclude \eqref{eq_sequential}. 
%Rescaling and passing to limits in \eqref{eq_sequential}, the blowdown $\bar \gamma_\infty$ of $\gamma$ therefore satisfies
%	\[
%	v(\bar \gamma_\infty(t)) = t \qquad \text{for } \, |t| \le R.
%	\]
%Hence, necessarily $\bar \gamma_\infty(t) = (t,o'_\infty)$ for $|t| \le R$. By the arbitrariness of $R$, we deduce that $\bar \gamma_\infty = \gamma_\infty$. 
\end{proof}

We first study the $2$-dimensional case, where Theorem \ref{teo_ric} and Savin's result \cite{savin} are enough to give a full classification. 

%\begin{theorem}\label{teo_sec_surface}
%Let $M$ be a complete surface with $\Sec \geq 0$, and let $u \in C(M)$ be a non-constant $\infty$-harmonic function such that
%\begin{eqnarray}
%\limsup_{r(q) \to \infty} \frac{u(q)}{r(q)} < \infty,
%\end{eqnarray}
%where $r$ is the distance from a fixed origin. Then, $M = \R^2$ or $M = \R \times \mathbb{S}^1$. Furthermore, $u$ only depends on the arclength $x$ of a split $\R$ factor, and it is affine in $x$. 
%\end{theorem}

\begin{proof}[Proof of Theorem \ref{teo_sec_surface_intro}]
By Theorem \ref{teo_ric} and Lemma \ref{lem_blowgamma}, $M = \R \times N$ for some $1$-dimensional complete manifold $N$, which is therefore either $\R$ or $\mathbb{S}^1$. If $M = \R^2$, since $u \in \lip(M)$ we can apply Savin's result to deduce that $u$ is affine. If $M = \R \times \mathbb{S}^1$, we consider the universal covering $\pi : \R \times \R \to \R \times \mathbb{S}^1$ and the preimage $\bar u = u \circ \pi$, which is $\infty$-harmonic. Savin's result guarantees that $\bar u$ is affine on $\R^2$, so $\bar u(x,y) = ax + by + c$. By construction, $\bar u$ is bounded in the $y$-coordinate for any fixed $x$, thus $b=0$ and $\bar u(x,y) = ax + c$ only depends on the first factor.
\end{proof}

In dimension $m \ge 3$, we are going to show uniqueness of the blowdown of $u$. This depends on refined one-sided gradient estimates for solutions to approximate problems which were obtained, in Euclidean setting, by Evans and Smart \cite{evsm}. We closely follow the approach therein but with a different approximation, as we use solutions $u_p$ to the $p$-Laplace equation  
\[
\left\{ \begin{array}{ll}
\Delta_p u_p = 0 & \quad \text{on } \, \Omega \Subset M, \\[0.2cm]
u_p = u & \quad \text{on } \, \partial \Omega  
\end{array}\right.
\]
in the limit $p \to \infty$. The main properties of $u_p$ are put off to the next section as not to interrupt the flow of the discourse. We begin with the following

\begin{lemma}\label{lemma_grad_estimate}
Let $ M = \R\times \R \times N'$ be a complete manifold and let $x^1, x^2, \pi : M \to \R$ be the natural projections onto the three factors. Consider a smooth function $u \colon M \to \R$ satisfying 
\[
\max_{B_T}\vert u - b_1 x^1 - b_2 x^2\vert < \eta T,
\]
for some $\eta, T >0$, where $B_T$ is a geodesic ball centered at $o = (0,0,\tilde o)$. Then, there exists an interior point $q_0 \in B_T$ such that
\begin{eqnarray}
\vert \partial_{x^i} u(q_0) - b_i\vert &\leq & 4\eta, \quad \text{for $i = 1,2$}, \\[0.2cm]
\vert \nabla^{N'} u(q_0)\vert &\leq & 4\eta.
\end{eqnarray} 
\end{lemma}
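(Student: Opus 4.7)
The plan is to reduce the problem to a single gradient bound. Set $F(q) = u(q) - b_1 x^1(q) - b_2 x^2(q)$ on $M$. Since the metric on $M = \R \times \R \times N'$ is a product, the gradient decomposes as
\[
\nabla F = (\partial_{x^1} u - b_1)\partial_{x^1} + (\partial_{x^2} u - b_2)\partial_{x^2} + \nabla^{N'} u,
\]
so $|\nabla F|^2$ is exactly the sum of the three squared quantities one wishes to control. It therefore suffices to produce $q_0 \in B_T$ at which $|\nabla F(q_0)| \le 4\eta$, the hypothesis of the lemma reading simply $|F| < \eta T$ on $B_T$.

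I would then argue by contradiction via a normalized gradient flow. Suppose $|\nabla F| > 4\eta$ everywhere on $B_T$. The vector field $\nabla F / |\nabla F|^2$ is smooth on $B_T$ under this assumption, so the flow
\[
\dot\gamma(s) = \frac{\nabla F(\gamma(s))}{|\nabla F(\gamma(s))|^2}, \qquad \gamma(0) = o,
\]
is defined locally and satisfies $\tfrac{d}{ds} F(\gamma(s)) \equiv 1$, hence $F(\gamma(s)) = F(o) + s$. Moreover $|\dot\gamma(s)| = 1/|\nabla F(\gamma(s))| < 1/(4\eta)$, so the length of $\gamma|_{[0,s]}$ is bounded by $s/(4\eta)$.

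The main technical step is a short continuation argument: if $\gamma$ were to exit $B_T$ first at some $s_\ast \le 4\eta T$, then $T = \di(o,\gamma(s_\ast))$ would be majorized by the length of $\gamma|_{[0,s_\ast]}$, which is $< s_\ast/(4\eta) \le T$, a contradiction. Hence $\gamma(s) \in B_T$ for every $s < 4\eta T$, which forces $|F(\gamma(s))| < \eta T$ along the entire flow. Writing this as $F(o) + s < \eta T$ and sending $s \to (4\eta T)^-$ yields $F(o) \le -3\eta T$, which clashes with the lower bound $F(o) > -\eta T$ coming from $o \in B_T$. This contradiction produces the required $q_0$.

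I expect the mildly delicate point to be only the bookkeeping of the flow --- namely, ensuring $\gamma$ does not escape $B_T$ prematurely --- which is handled by the simple length-versus-distance comparison above. The rest of the argument is elementary and, noteworthy, requires neither a PDE satisfied by $u$ nor any curvature assumption on $N'$; it uses only the product structure of $M$, the smoothness of $u$, and the $L^\infty$-closeness to the affine function.
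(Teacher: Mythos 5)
Your argument is correct, and it takes a genuinely different route from the paper's. The paper's proof subtracts a quadratic barrier: it sets $w = b_1 x^1 + b_2 x^2 - 2\tfrac{\eta}{T}\rho^2$ with $\rho = \dist_M(\cdot,o)$, observes $u-w < \eta T$ at $o$ and $u-w > \eta T$ on $\partial B_T$, locates an interior minimum $q_0$, and reads off $\nabla u(q_0) = \nabla w(q_0)$, whose component along $\nabla\rho$ is bounded by $4\eta$. This needs Calabi's trick at $q_0$ because $\rho^2$ may fail to be smooth across the cut locus. Your proof instead assumes $|\nabla F| > 4\eta$ on the (open) ball, integrates the vector field $\nabla F / |\nabla F|^2$, and derives a contradiction from the growth $F(\gamma(s)) = F(o)+s$ against $|F| < \eta T$ on $B_T$, using the length--distance comparison to control the exit time. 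The two approaches are both elementary and both rely only on the product structure, smoothness of $u$, and the $L^\infty$-closeness to the affine function; neither uses a PDE for $u$ nor curvature. A small stylistic advantage of your route is that it sidesteps the irregularity of the distance function entirely, though it requires being slightly more careful that the flow exists up to time $4\eta T$ (the bounded speed $1/|\nabla F| < 1/(4\eta)$ and the escape lemma for the compact ball take care of this, and you do address it). One cosmetic point worth making explicit in a final write-up: the contradiction hypothesis should be placed on the interior of $B_T$, since the lemma's conclusion is an interior point, and the flow in any case never reaches $\partial B_T$ before time $4\eta T$.
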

\begin{proof}
Consider the auxiliary function 
\[
w = b_1 x^1 + b_2 x^2 - 2\frac{\eta}{T}\rho^2, \qquad \rho(q) \doteq \dist_M(q,o).
\]
We observe that $(u-w)(o) < \eta T$ and $(u-w)(q) \geq \eta T$ for any $q \in \partial B_T$. Therefore there exists an interior minimum point $q_0 \in B_T$. If $\rho$ is smooth around $q_0$, the desired conclusion follows from $\nabla (u-w)(q_0) = 0$. Otherwise, we use Calabi's trick by considering a unit speed minimizing geodesic $\gamma$ from $o$ to $q_0$ and the function $\rho_\eps$ with $\rho_\eps(q) = \eps + \dist_M(\gamma(\eps),q)$. From $\rho_\eps \ge \rho$ on $M$ with equality at $q_0$, and since $\rho_\eps$ is smooth near $q_0$ as shown in \cite[end of Lemma 7.1.9]{petersen}, the conclusion follows as above by replacing $\rho$ with $\rho_\eps$. 
\end{proof}	
	
With the above preparation, we are ready to prove the main result of this section, which generalizes \cite[Proposition 2]{hongzhao}.

\begin{theorem}\label{teo_unique_blowdown}
Let $M$ be a complete manifold with $\Sec \geq 0$, and let $u \in C(M)$ be an $\infty$-harmonic function such that
\begin{eqnarray}
\limsup_{r(q) \to \infty} \frac{u(q)}{r(q)} =1,
\end{eqnarray}
where $r$ is the distance from a fixed origin. Then, for each $o \in M$ the blowdown $v_\infty$ of $u$ at $o$ does not depend on the chosen sequence, and there exists a splitting $M = (\R \times N, \di x^2 + g_N)$  for some complete manifold $(N,g_N)$ such that 
	\begin{equation}\label{eq_limit_x}
	\lim_{x \to +\infty} \frac{u(x,y)-u(-x,y)}{2x} \to 1 \qquad \forall \, y \in N.
	\end{equation}
Moreover, in the splitting $M_\infty = \R \times N_\infty$ induced by $M = \R \times N$, it holds $v_\infty(x,y) = x$. 	
\end{theorem}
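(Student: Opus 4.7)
The plan is to fix a sequence $\lambda$, apply Theorem \ref{teo_ric} together with Lemma \ref{lem_blowgamma} to obtain the splitting $M = \R \times N$ with $\gamma^\lambda(t) = (t,y_0)$, and reduce the proof to three sub-tasks: (A) establishing the full limit $\lim_{x \to +\infty}(u(x,y_0)-u(-x,y_0))/(2x) = 1$ along the split line; (B) extending (A) to every $y \in N$; (C) deducing uniqueness of $v_\infty$ and the identity $v_\infty(x,y) = x$.

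Task (B) is a direct consequence of (A) and the $1$-Lipschitz property of $u$: the product metric $\di x^2 + g_N$ yields $|u(\pm x, y) - u(\pm x, y_0)| \le \di_N(y,y_0)$ for every $y$, which after dividing by $2x \to \infty$ disappears into $o(1)$. Task (C) follows from (A) by noting that, combined with the pointwise Lipschitz bounds $u(\pm x, y_0) - u(0,y_0) \le \pm x$, the equality-in-the-limit forces the separate limits $u(\pm x, y_0)/x \to \pm 1$ as $x \to +\infty$. Then for any blowdown sequence $\mu$, since $(\mu_j x, y_0) \in M_j^\mu$ converges in the pointed Gromov--Hausdorff sense to $(x, o'_\infty) \in \R \times N_\infty = M_\infty$, we obtain
\[
v^\mu(x, o'_\infty) \;=\; \lim_{j \to \infty} \frac{u(\mu_j x, y_0) - u(o)}{\mu_j} \;=\; x,
\]
and Lemma \ref{lem_imposplitting} extends this to $v^\mu(x,y) = x$ on all of $M_\infty$, which proves both the independence from $\mu$ and the claimed formula.

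Task (A) is therefore the core of the proof. Lemma \ref{lem_blowgamma} only provides the sequential limit at the radii $R_j = \lambda_j R$, which must be bootstrapped to a continuous limit. Following the strategy of \cite{evsm,hongzhao}, we approximate $u$ on large balls $B_T(o)$ by the $p$-Laplacian solutions $u_p$ with boundary values $u|_{\partial B_T}$; as developed in Section \ref{sec_pLapla}, $u_p$ is smooth, satisfies $\|\nabla u_p\|_\infty \le \lip(u) = 1$, and $u_p \to u$ uniformly as $p \to \infty$. The blowdown identity $v^\lambda(x,y) = x$ translates into
\[
\max_{B_{\lambda_j T}(o)} |u - u(o) - x| \;=\; \eta_j \lambda_j T, \qquad \eta_j \to 0.
\]
Applying Lemma \ref{lemma_grad_estimate} (or its immediate variant for $M = \R \times N$, whose proof via Calabi's trick is unchanged) to $u_p$ with $b_1 = 1$, we find interior points $q_0 \in B_{\lambda_j T}$ at which $|\partial_x u_p(q_0) - 1|$ and $|\nabla^N u_p(q_0)|$ are both $O(\eta_j) + O(\|u-u_p\|_{L^\infty(B_{\lambda_j T})}/(\lambda_j T))$. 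Passing $p \to \infty$ for a fixed ball and then letting $j \to \infty$, combined with the anti-peeling Lemma \ref{lem_antipeeling}, extracts segments in $M$ of diverging length on which $u$ has slope exactly $1$. The rigidity imposed by the splitting $M = \R \times N$ forces these segments to be translates of $\gamma^\lambda$, whence the slope-$1$ growth propagates to $\gamma^\lambda$ itself, yielding (A).

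The main obstacle is Task (A): the quantitative interplay between Lemma \ref{lemma_grad_estimate} and the $p$-Laplacian limit, namely, the transfer of gradient information from the smooth approximations $u_p$ to the viscosity solution $u$ in a way that is compatible with the rigidity forced by $\infty$-harmonicity and the anti-peeling Lemma \ref{lem_antipeeling}. This is exactly the content postponed to Section \ref{sec_pLapla}.
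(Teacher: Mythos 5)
Your reduction to tasks (A), (B), (C) inverts the logical flow of the paper's argument in a way that creates a genuine gap at the center. The paper observes (via Lemma \ref{lem_blowgamma} and \eqref{eq_sequential}) that the full limit \eqref{eq_limit_x} is a \emph{consequence} of the uniqueness of the blowdown, not the other way around: once one knows that every sequence $\mu_k \to \infty$ produces the same line $\gamma^\mu = \gamma^\lambda$ in $M$, the sequential limit \eqref{eq_sequential} applied along arbitrary $\mu_k$ forces the full limit. The core analytic content of the theorem is therefore the uniqueness statement itself, which the paper proves by contradiction using the one-sided gradient estimate, Theorem \ref{teo_gradesti_oneside}. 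Your tasks (B) and (C) are fine — (B) is indeed the triangle inequality, and your deduction in (C) that $u(\pm x,y_0)/x \to \pm 1$ (using $1$-Lipschitzianity and the two-sided limit) and then $v^\mu(x,o'_\infty)=x$ for any $\mu$ is in the right spirit — but they only work if (A) is established first, and your plan for (A) does not hold up.

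The proposed route to (A) has two problems. First, the estimate $\max_{B_{\lambda_j T}}|u - u(o) - x| = \eta_j\lambda_jT$ with $\eta_j \to 0$ is only available at the scales $\lambda_j T$, not at intermediate radii $T$; arguing from it as if $\max_{B_T}|u-x| = o(T)$ for all $T \to \infty$ would already presuppose uniqueness of the blowdown, which is what you are trying to prove. Second, Lemma \ref{lemma_grad_estimate} produces a single interior point $q_0 \in B_{\lambda_j T}$ at which $\nabla u_p$ is close to $\partial_x$, but there is no mechanism by which this pointwise information — for the smooth $p$-approximation at an uncontrolled location — propagates to slope-$1$ behaviour of the viscosity solution $u$ along the specific line $\gamma^\lambda$. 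Crucially, $u_p \to u$ only uniformly, not in $C^1$, so the smooth gradient bounds do not directly transfer; and the anti-peeling Lemma \ref{lem_antipeeling} would produce a slope-$1$ segment issuing from whatever point you apply it to, with no reason for that segment to be a translate of $\gamma^\lambda$ (the product $\R\times N$ contains plenty of geodesics of slope $1$ that are not parallel to the $\R$-factor). The ingredient you are missing is the one-sided gradient estimate $|\nabla u_{p,j}|^2 \le \partial_1 u_{p,j} + C\ell^{1/8}(1+\ell)^{5/4}$ from Theorem \ref{teo_gradesti_oneside}, valid on a full ball $B_{\lambda_j/2}$, which does not appear in your plan at all. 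The paper's contradiction argument then proceeds by assuming $v^\mu \neq v^\lambda$, obtaining a second split factor so that $M = \R\times\R\times\tilde N$ and $v^\mu = b_1 x^1_\infty + b_2 x^2_\infty$ with $b_2 \neq 0$, applying the one-sided estimate at the $\lambda$ scale on a ball that contains $B_{\mu_{k_0}}$, and playing it against the pointwise estimate from Lemma \ref{lemma_grad_estimate} at the $\mu$ scale, where the gradient must point partly in the $x^2$ direction. It is the interplay of the \emph{two} blowdown scales against each other that closes the argument, and that interplay is absent from your sketch.
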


\begin{proof}
We already know by Lemma \ref{lip_const_S_R} that $\lip(u,M)=1$. We introduce some notation. Let $M^\lambda_j \to M_\infty$ be a tangent cone with associated blowdown $u_j^\lambda \to v^\lambda$, and let $\R \times N$ be the splitting of $M$ induced by the line $\gamma^\lambda$. Write $o = (0,o') \in \R \times N$. Notice that, by Lemma \ref{lem_blowgamma}, $N_\infty$ is the tangent cone of $N$ at $o'$. Also, \eqref{eq_sequential} guarantees that \eqref{eq_limit_x} holds with $y = o'$ provided that the blowdown $v^\lambda$ does not depend on $\lambda$. Once \eqref{eq_limit_x} is shown for $y = o'$, its validity for any fixed $y$ immediately follows from the triangle inequality, since
\[
\big|u(x,y) - u(-x,y) - \big(u(x,o') - u(-x,o')\big) \big| \le 2 \di_N(y,o').
\]
To conclude the proof we assume, by contradiction, the existence of a sequence $\mu = \{\mu_k\} \to \infty$ such that the blowdown $u_k^\mu \to v^\mu$ associated to the tangent cone $M_k^\mu \to M_\infty$ satisfies $v^\mu \neq v^\lambda$. Consider the line $\gamma_\infty^\mu$ in $M_\infty$ induced by $\mu$. From $v^\mu(\gamma_\infty^\mu(s)) = s$ and $v^\mu \neq v^\lambda$, we deduce $\gamma^\mu_\infty \neq \gamma^\lambda_\infty$ and therefore, in the splitting $\R \times N_\infty$ induced by $\gamma^\lambda_\infty$, we can write
	\[
	\gamma^\mu_\infty(s) = (b_1 s, \sigma^\mu_{\infty}(s)), \qquad b_1^2 + |\dot \sigma^\mu_\infty|^2 = 1.
	\]	
Since $\sigma^\mu_{\infty}$ is a line as well, it induces a splitting $N_\infty = \R \times \tilde N_\infty$ and, by Lemma \ref{lem_blowgamma}, a corresponding splitting $N = \R \times \tilde N$. Summarizing, we can write
	\[
	M_\infty = \R \times \R \times \tilde N_\infty, \qquad o_\infty = (0,0,\tilde o_\infty)
	\]
with induced projections $(x^1_\infty, x^2_\infty, \pi_\infty) : M_\infty \to \R \times \R \times \tilde N_\infty$, and in these coordinates 
	\[
	\gamma^\lambda_\infty(t) = (t,0, \tilde o_\infty), \qquad \gamma^\mu_\infty(s) = (b_1s, b_2s, \tilde o_\infty),  \ \ \text{with } \, b_1^2 + b_2^2 = 1, \ \ b_2 \neq 0. 
	\]
Accordingly, 
	\[
	M = \R \times \R \times \tilde N, \qquad o = (0,0,\tilde o)
	\]
with projections $(x^1, x^2, \pi)$, and 
	\[
	\gamma^\lambda(t) = (t,0,\tilde o), \qquad \gamma^\mu(s) = (b_1s, b_2s, \tilde o),  \ \ \text{with } \, b_1^2 + b_2^2 = 1, \ \ b_2 \neq 0. 
	\]
In these coordinates
	\[
	v^\lambda = x^1_\infty, \qquad v^\mu = b_1 x^1_\infty + b_2 x^2_\infty,
	\]
whence 
	\[
	u_j^\lambda \to x^1_\infty, \qquad u_k^\mu \to b_1 x^1_\infty + b_2 x^2_\infty	
	\]
pointwise in the Gromov-Hausdorff sense. Also, associated to the tangent cone $M_j^\lambda \to M_\infty$ we define coordinates 
	\[
	x^1_{\lambda,j} = \frac{x^1}{\lambda_j}, \qquad x^2_{\lambda,j} = \frac{x^2}{\lambda_j},
	\] 
which are $1$-Lipschitz on $M^\lambda_j$. By construction, $x^i_{\lambda,j} \to x^i_\infty$ pointwise in the Gromov-Hausdorff sense for each $i \in \{1,2\}$. Likewise, the coordinates	
	\[
	x^1_{\mu,k} = \frac{x^1}{\mu_k} \qquad \text{and} \qquad x^2_{\mu,k} = \frac{x^2}{\mu_k} \qquad \text{on } \, M_k^\mu
	\] 
satisfy $x^i_{\mu,k} \to x^i_\infty$. 
%Define the cylinder
%	\[
%	C_{\lambda_j R} = \bar{B}_{\lambda_jR}(0,0) \times \bar{B}_{\lambda_jR}(\tilde o) \subset M = \R^2 \times \tilde N.
%	\] 
%%	C^{\lambda,j}_R = \bar{B}_R(0,0) \times \bar{B}_R(o') \subset M_j^\lambda = \Big(\R \times \R, (\di %x^1_{\lambda,j})^2 + \big(\di x^2_{\lambda,j})^2\Big) \times \tilde N_j^\lambda. 
%and notice that $C_{\lambda_j R}$ is the product of two balls of radius $R$ in the rescaled manifold $M^\lambda_j$, so that $C_{\lambda_j R} \to C_R^\infty$ in pmGH, where $C_R^\infty = \bar{B}_R((0,0)) \times \bar B_R(\tilde o_\infty) \in M_\infty = \R^2 \times \tilde N_\infty$. 
%
%By the definition of pointwise convergence with respect to Gromov-Hausdorff topology, $u^\lambda_j \to x^1_%\infty$ and $x_j^i \to x_\infty^i$ for $i \in \{1,2\}$ mean that for each $R$, each $q_\infty \in B_R^\infty$ and $q_j \in B^j_{R} \subset M^\lambda_j$ with $q_j \to q_\infty$, it holds 
%	\begin{equation}\label{eq_bonito}
%	u^\lambda_j(q_j) \to x^1_\infty(q_\infty), \qquad x_j^i(q_j) \to x_\infty^i(q_\infty) \quad \forall \, i \in \{1,2\}.
%	\end{equation}
It easily follows that, for each $\ell >0$, there exists $j_0(\ell)$ such that
	\begin{equation}\label{eq_firstblow}
	\max_{q \in B_{\lambda_j}} |u(q) - x^1(q)| < \ell \lambda_j \qquad \text{for } \, j \ge j_0(\ell).
	\end{equation}
Indeed, otherwise, there exist $\ell >0$ and points $q_j \in B_{\lambda_j} \subset M$ such that
	\[
	\ell \le |u^\lambda_j(q_j) - x^1_{\lambda,j}(q_j)|.
	\]
Up to a subsequence, $q_j \in B_1^j \subset M_j^\lambda$ converges to $q_\infty \in B_1^\infty$ and therefore
	\[
	\begin{array}{lcl}
	\ell & \le & \disp |u^\lambda_j(q_j) - x^1_{\lambda,j}(q_j)| \le |u^\lambda_j(q_j) - x^1_\infty(q_\infty)| + |x^1_\infty(q_\infty) - x^1_{\lambda,j}(q_j)| \\[0.2cm]
	& \to & 0 \qquad \text{as } \, j \to \infty,
	\end{array}
	\]
contradiction. Similarly to \eqref{eq_firstblow}, for the tangent cone $M^\mu_k \to M_\infty$ and for $\eta >0$ we obtain  
	\begin{equation}\label{eq_secondblow}
	\max_{q \in B_{\mu_k}} |u(q) - b_1x^1(q)- b_2x^2(q)| < \eta \mu_k \qquad \text{for } \, k \ge k_0(\eta).
	\end{equation}
Consider for each $p>2$ and $j$ the solution $u_{p,j}$ to 
	\[
	\left\{ \begin{array}{ll}
	\Delta_p u_{p,j} = 0 & \text{on } \, B_{2\lambda_j}, \\[0.4cm]
	u_{p,j} = u & \text{on } \, \partial B_{2\lambda_j}.
	\end{array}\right.
	\]
As recalled in Section \ref{sec_pLapla}, $u_{p,j} \to u$ uniformly on $\overline{B}_{2\lambda_j}$ as $p \to \infty$. Moreover, by \eqref{eq_firstblow} we get
	\begin{equation}\label{eq_ipoell}
	\ell \ge \lambda_j^{-1} \|u_{p,j} - x^1\|_{L^\infty(B_{\lambda_j})} \qquad \text{for $p \ge p_j$ large}.
	\end{equation}
Therefore, since $\Sec \ge 0$ and $u$ is $1$-Lipschitz, Theorems \ref{teo_gradesti_p} and \ref{teo_gradesti_oneside} with $p \ge \ell^{-1}$ guarantee the existence of a constant $C = C(m,u(o))$ such that	
	\[
	|\nabla u_{p,j}| \le C, \qquad |\nabla u_{p,j}|^2 \le \partial_1 u_{p,j} + C \ell^{\frac{1}{8}}(1+\ell)^\frac{5}{4} \quad \text{on } \, B_{\lambda_j/2}.
	\]
We now choose $\ell, \eta$. First, define
	\[
	\theta = 1-b_1 \in (0,1),
	\]
and let $\ell,\eta>0$ small enough to satisfy
	\[
	C\ell^{\frac{1}{8}}(1+\ell)^\frac{5}{4} < \frac{\theta}{4}, \qquad \eta < \frac{3\theta}{4 \cdot 28}.
	\]
Therefore,  
	\begin{equation}\label{eq_grad_ones}
	|\nabla u_{p,j}|^2 \le \partial_1 u_{p,j} + \frac{\theta}{4} \qquad \forall \, j \ge j_0(\ell).
	\end{equation}
For $k_0 = k_0(\eta)$ as in \eqref{eq_secondblow}, choose $j_1= j_1(\eta)$ such that $\lambda_{j_1} \ge 2\mu_{k_0}$ and let
	\[
	j_2(\ell,\eta) = \max\{ j_0(\ell),j_1(\eta)\}.
	\] 
We choose $j = j_2$ and write $u_p = u_{p,j_2}$ for ease of notation. From $B_{\mu_{k_0}} \subset B_{\lambda_{j_2}/2}$ and the uniform convergence $u_p \to u$ as $p \to \infty$, and from \eqref{eq_firstblow},  \eqref{eq_secondblow}, we infer the existence of $p_1 = p_1(\ell,\eta)$ for which  
\begin{equation}\label{eq_approx}
%\begin{array}{l}
%	\disp \max_{q \in B_{\lambda_{j_2}}} |u^\varepsilon(q) - x^1(q)| < \ell \lambda_{j_2} \qquad \text{for } %\, p \in (0,p_1) \\[0.4cm]
	\max_{q \in B_{\mu_{k_0}}} |u_p(q) - b_1x^1(q)- b_2x^2(q)| < \eta \mu_{k_0} \qquad \text{for } \, p \ge  p_1.
%\end{array}
\end{equation}
Using Lemma \ref{lem_blowgamma}, we get the existence of $q_0 \in B_{\mu_{k_0}}$ such that
	\[
	\begin{array}{lcl}
	|\partial_1 u_p(q_0)| \le b_1 + 4\eta, \\[0.3cm]
	|\nabla u_p(q_0)| \ge 1 - |(b_1-\partial_1 u_p) \partial_{x^1} + (b_2- \partial_2 u_p) \partial_{x^2} - \nabla^{N'} u_p(q_0)| \ge 1- 12\eta.  
	\end{array}
	\]
On the other hand, \eqref{eq_grad_ones} and $B_{\mu_{k_0}} \subset B_{\lambda_{j_2}/2}$ give $|\nabla u_p(q_0)|^2 \le \partial_1 u_p(q_0) + \theta/4$. Putting together the estimates we conclude
	\[
	1 - 24\eta \le (1-12\eta)^2 \le |\nabla u_p(q_0)|^2 \le b_1 + 4\eta + \frac{\theta}{4} = 1 - \frac{3\theta}{4} + 4\eta,
	\]
contradicting our choice for $\eta$. 		
\end{proof}

We conclude this section with a sufficient condition for the function $u$ in Theorem \ref{teo_unique_blowdown} to depend only on the variable $x$. The result below is inspired by the proof of  the ``Half-space theorem" in \cite[Thm. 4.1]{CEG}, which guarantees that a solution to $\Delta_\infty u \ge 0$ on $\R^m$ is affine provided that it lies below an affine function. However, our statement is significantly different: on the one hand, it only applies to $\infty$-harmonic functions, while on the other hand the extra condition we require is only localised on a single ray.

\begin{theorem}\label{teo_split_sec}
Let $M$ be a complete manifold with $\Sec \geq 0$, and let $u \in C(M)$ be an $\infty$-harmonic function such that
\begin{eqnarray}\label{eq_limit}
\limsup_{r(q) \to \infty} \frac{u(q)}{r(q)} =1,
\end{eqnarray}
where $r$ is the distance from a fixed origin. Assume that there exist a ray $\gamma$ and a constant $C$ such that either 
\[
u\big(\gamma(t)\big) \ge t - C \qquad \text{or} \qquad u\big(\gamma(t)\big) \le -t + C
\]
for each $t \in \R^+$. Then, there exists a splitting $M = (\R \times N, \di x^2 + g_N)$ for some complete manifold $(N,g_N)$ such that $u(x,y) = x$ for each $(x,y) \in \R \times N$.
\end{theorem}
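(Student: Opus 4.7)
I assume the first case $u(\gamma(t)) \ge t - C$; the other follows by applying the same argument to $\tilde u = -u$, which then satisfies $\tilde u(\gamma(t)) \ge t - C$, and relabeling the $\R$-factor of the splitting at the end. By Theorem \ref{teo_unique_blowdown} we already have $M = \R\times N$ together with the asymptotic identity \eqref{eq_limit_x}, while Lemma \ref{lip_const_S_R} gives $\lip(u,M)=1$. The first task is to locate $\gamma$ within the splitting. A unit-speed ray in $\R\times N$ has the form $\gamma(t)=(at+b, g(t))$ with $a\in [-1,1]$ and $g$ a constant-speed geodesic of $N$ satisfying $a^2+|g'|_N^2=1$. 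The hypothesis together with the $1$-Lipschitzness of $u$ forces $u(\gamma(t)) = t + O(1)$; rescaling and passing to the blowdown of Theorem \ref{teo_unique_blowdown}, where $u_j^\lambda$ converges to $v(x,y)=x$, gives $v(\gamma_\infty(s))=s$ on the blowdown ray $\gamma_\infty(s)=(as, g_\infty(s))$, and since $v=x$ this reads $as=s$. This forces $a=1$ and $g$ constant, so $\gamma(t)=(t+b, y_0)$ for some $b\in\R$ and $y_0\in N$.

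With $\gamma$ identified, a direct computation in the product metric yields $b_\gamma(x,y)=b-x$. Combining $u(p)\ge u(\gamma(t))-\di(p,\gamma(t))\ge t-C-\di(p,\gamma(t))$ with $\di(p,\gamma(t))-t\to b_\gamma(p)$ and letting $t\to\infty$ produces the global lower bound $u(x,y)\ge x-C_1$ on $M$ with $C_1=b+C$. Setting $\phi_y(x):=u(x,y)-x$, the $1$-Lipschitzness of $u$ in the $x$-direction forces $\phi_y$ to be non-increasing and bounded below by $-C_1$. To pin down its limit, I consider the translations $u_h(x,y):=u(x+h,y)-h$, which are equi-Lipschitz $\infty$-harmonic. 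Any subsequential locally uniform limit $w$ of $u_h$ as $h\to\infty$ satisfies $w(x,y_0)=x+\alpha$, where $\alpha:=\lim_{\tau\to\infty}\phi_{y_0}(\tau)$ exists by monotonicity and the lower bound. Lemma \ref{lem_imposplitting} applied to $w-\alpha$ then forces $w\equiv x+\alpha$ on $\R\times N$, so $u_h\to x+\alpha$ locally uniformly and $\phi_y(x)\to\alpha$ locally uniformly in $y$. Combined with monotonicity this gives $u(x,y)\ge x+\alpha$ throughout $M$.

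The crux, and what I expect to be the hardest step, is upgrading this global inequality to equality. The plan is to invoke the tangency principle of Proposition \ref{prop_SMP}: if the global supremum $-\alpha$ of $-u+x$ on $M$ is attained at some $(x_1,y_1)$, then choosing the ray $\sigma(t)=(x_1+t,y_1)$ (for which $b_\sigma(x,y)=x_1-x$) and applying the proposition to $\tilde u=-u$ with $c=1$ yields a splitting $M=\R\times N'$ in which $\tilde u$ is affine in the new $\R$-factor; by item $(iii)$ the new $\R$-factor is oriented opposite to the original, and tracing back produces $u(x,y)=x+c'$ on $M$, which after translating the $\R$-coordinate is the desired conclusion. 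To ensure attainment, my idea is to return to the $p$-harmonic approximation of Section \ref{sec_pLapla} and the one-sided gradient estimate $|\nabla u_p|^2\le\partial_x u_p+C\ell^{1/8}(1+\ell)^{5/4}$ invoked in the proof of Theorem \ref{teo_unique_blowdown}: since the deviation parameter $\ell_h:=\|u_h-x-\alpha\|_{L^\infty(B_{2R})}/(2R)$ tends to zero as $h\to\infty$, the estimate passes to the limit and forces $\nabla u_p\to\partial_x$ in $L^2$ on balls $B_R(h,o')$, and taking $p\to\infty$ gives $\nabla u\to\partial_x$ in $L^2$ there. A Fubini argument along a generic $x$-line combined with $1$-Lipschitz continuity of $u$ in $y$ should then isolate a half-line $\{(t,y_0'):t\ge T\}$ on which $u(t,y_0')=t+c'$ exactly; the endpoint $(T,y_0')$ is a genuine tangency point where Proposition \ref{prop_SMP} applies, and Lemma \ref{lem_imposplitting} then produces the global conclusion.
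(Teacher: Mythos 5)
Your opening moves run parallel to the paper's proof: you locate $\gamma$ inside the product splitting by passing to the blowdown and using $v = x$ together with the uniqueness in Lemma \ref{lem_blowgamma}; you note the monotonicity of $\phi_y(x) = u(x,y)-x$; and you show the limit $\alpha$ exists and is independent of $y$. The paper proves $y$-independence by the same quadratic-expansion device underlying Lemma \ref{lem_imposplitting}, whereas you use a compactness/translation argument together with that lemma -- equivalent in substance. Your Busemann derivation of the one-sided bound $u\ge x-C_1$ is also fine (the paper gets the analogue $u\le x+C$ directly from non-increase of $\delta$ and the assumption on $\gamma$).

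The genuine gap is in the crux. You have $u(x,y)\ge x+\alpha$ on $M$ with $\phi_y(x)\to\alpha$ only as $x\to+\infty$; you want equality. Your strategy is to manufacture an actual touching point $(x_1,y_1)$ where $u(x_1,y_1)=x_1+\alpha$ so that Proposition \ref{prop_SMP} applies. But there is no reason such a point exists: the infimum of $\phi$ is approached at $x\to+\infty$ and need not be attained. Your proposed remedy via the $p$-harmonic one-sided estimate does not deliver this. The estimate $|\nabla u_p|^2\le \partial_x u_p + C\ell^{1/8}(1+\ell)^{5/4}$ with $\ell_h\to 0$ only controls $u_p$ on translated balls $B_R(h,o')$ with $h\to\infty$, i.e.\ it captures asymptotic behaviour in the translated frame, not behaviour at any fixed finite location. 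Even granting some $L^2$ convergence $\nabla u\to\partial_x$ in an asymptotic sense, this does not isolate a concrete half-line $\{(t,y_0')\colon t\ge T\}$ on which $u(t,y_0')=t+c'$ exactly; $L^2$-smallness of $\nabla u-\partial_x$ over drifting balls gives no pointwise identity at a finite endpoint $(T,y_0')$. Moreover, if one could prove $\nabla u\equiv\partial_x$ a.e., the conclusion $u=x+\mathrm{const}$ would already follow and the tangency step would be redundant -- so the scheme is either circular or doesn't close.

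The paper avoids the attainment issue altogether with a direct comparison-with-cones argument. After normalizing so that $\delta(-\infty,y)=0$, one assumes a strict inequality $u(x_0,y_0)<x_0-\mu$ at some point, sets $v(x,y)=u(x+x_0,y)-x_0$, and compares $v$ on large spheres $S_R$ centered at $(-r,y_0)$ with the cone $a+b\,\dist((-r,y_0),\cdot)$ obtained from the two available upper bounds $v\le x$ and $v\le -\mu + \sqrt{x^2+d_N(y,y_0)^2}$; sending $R=2r\to\infty$ forces $\delta(-\infty,y_0)\le -\mu/4<0$, a contradiction. This uses only the cone-comparison characterization and requires no touching point, no $p$-approximation, and no Fubini step. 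I recommend replacing the end of your argument with this comparison argument, or -- if you want to keep the tangency route -- you would first need to independently prove that the infimum of $u-x$ is attained, which as written you have not done.
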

\begin{proof}
In our assumptions, we know that $\lip(u,M) =1$. Define $o = \gamma(0)$. By Theorem \ref{teo_unique_blowdown}, $M$ splits as $\R \times N$ with metric $\di x^2 + g_N$ in such a way that \eqref{eq_limit_x} is satisfied. In particular, \eqref{eq_limit} holds both for $u$ and for $-u$. Up to changing the sign of $u$, we can therefore assume that 
\begin{equation}\label{eq_lb}
u\big(\gamma(t)\big) \le -t + C \qquad \forall \, t \in \R^+.
\end{equation}
We first claim that in coordinates $(x,y)$ we have $\gamma(t) = (-t,o')$. Indeed, let $M_\infty = \R \times N_\infty$ be the blow-down of $M$ at $o$, $o_\infty = (0,o'_\infty)$ its reference point and $u_\infty, \gamma_\infty$ the associated blowdowns of $u$ and $\gamma$. We know by Theorem \ref{teo_ric} that $u_\infty(x,y)=x$, with $x$ the arclength of the $\R$-factor properly oriented. Blowing down \eqref{eq_lb} and using that $u$, hence $u_\infty$, is $1$-Lipschitz we deduce $u_\infty(\gamma_\infty(t)) = -t$ for each $t \in \R^+$. Writing $\gamma_\infty(t) = (b_1t, \sigma_\infty(t)) \in \R \times N_\infty$ with $b_1^2 + |\sigma_\infty'|^2 =1$ we get $\gamma_\infty(t) = (-t,o'_\infty)$. Our claim follows by the uniqueness part in Lemma \ref{lem_blowgamma}. We therefore proved that
\begin{equation}\label{inss}
u(x,o') - x \le C \qquad \forall \, x \in (-\infty,0].
\end{equation}
Since $\lip(u,M)=1$, the function $x \mapsto \delta(x,y) \doteq u(x,y)-x$ is non-increasing on $\R$ (thus, \eqref{inss} holds for each $x \in \R$). Let us call $\delta(-\infty,y)$ its limit as $x \to -\infty$. By assumption, $\delta(-\infty,y_0)$ is finite. We prove that $\delta(-\infty,y)$ does not depend on $y$. First, since $u$ is $1$-Lipschitz we have
\[
|\delta(x,y) - \delta(x,y_0)| = |u(x,y)-u(x,y_0)| \le \di_N(y,y_0) \qquad \forall \, x \in \R,
\]
whence $\delta(-\infty,y)$ is finite for each $y$. Again since $u$ is $1$-Lipschitz,
\[
\begin{array}{lcl}
\disp |x + \delta(x,y) - \lambda - \delta(\lambda,y_0)|^2 & = & |u(x,y) - u(\lambda,y_0)|^2 \\[0.3cm]
& \le & (x-\lambda)^2 + \di_N(y,y_0)^2 .
\end{array}
\]
Expanding the squares and rearranging,
\[
\big[\delta(x,y)-\delta(\lambda,y_0)\big]^2 + 2(x-\lambda)(\delta(x,y)-\delta(\lambda,y_0)) \le \di_N(y,y_0)^2.  
\]
Discarding the first term on the left hand side, putting $x = 2\lambda<0$, dividing by $\lambda$ and letting $\lambda \to -\infty$ gives $\delta(-\infty,y)-\delta(-\infty, y_0) \ge 0$. On the other hand, letting $x = \lambda/2<0$, dividing by $\lambda$ and letting $\lambda \to -\infty$ gives $\delta(-\infty,y)-\delta(-\infty, y_0) \le 0$. Hence, $\delta(-\infty,y)=\delta(-\infty, y_0)$.

Up to translating $u$, we can therefore assume 
\[
\delta(-\infty, y) = 0 \qquad \text{for each } \, y \in N
\]
so that
$$
u(x,y) \leq x \quad \forall \, (x,y) \in M, \qquad \mbox{and} \qquad \lim\limits_{x\to -\infty}(x - u(x,y))=0.
$$
By contradiction, we assume that $u(x_0,y_0)< x_0$ for some $(x_0,y_0) \in M$.  Defining $v(x,y)\doteq u(x+x_0,y)-x_0$, we observe that $v$ is an $\infty$-harmonic function satisfying  
\begin{equation}\label{guanabara}
\lim\limits_{x\to -\infty}(x - v(x,y))=0 \quad \mbox{and} \quad v(x,y) \leq x, \quad \mbox{with} \quad v(0,y_0)<0.
\end{equation}
Fix $\mu>0$ such that $v(0,y_0) \leq -\mu$. Since $|\nabla v| \leq 1$ a.e. in $M$, we get
\begin{equation}\label{guanabara2}
v(x,y) \leq \sqrt{|x|^2+d_N(y,y_0)^2}-\mu.
\end{equation}
Next, for $R>r$ we consider the sphere and ball of radius $R$ in $\R \times N$ centered at $(-r,y_0)$:
$$
S_R=\left\{(x,y) \, \colon \; \sqrt{|x+r|^2+d_N(y,y_0)^2}=R \right\},
$$
and 
$$
\overline{B}_R=\left\{(x,y) \, \colon \; \sqrt{|x+r|^2+d_N(y,y_0)^2} \leq R \right\}. 
$$
In order to obtain an upper bound for $v(x,y)$ on $S_R$, on the one hand $v(x,y) \le x$, while on the other hand, by \eqref{guanabara2}, 
\begin{equation}\label{guanabara4}
\begin{array}{ccl}
v(x,y) & \le & -\mu + \sqrt{|x|^2+d_N(y,y_0)^2} \\[0.2cm]
& = & -\mu + \sqrt{(x+r)^2-2rx-r^2+d_N(y,y_0)^2} \\[0.2cm]
& = & - \mu + \sqrt{R^2-2rx-r^2} \qquad \text{on } \, S_R.
\end{array}
\end{equation}
Whence, for each $(x,y) \in S_R$ we have
\[
v(x,y) \le \max_{x \in [-R-r,R-r]} \min \left\{ x, - \mu + \sqrt{R^2-2rx-r^2} \right\}.
\]
Since the function $\sqrt{R^2-2rx-r^2}$ is decreasing in $x$, the maximum is attained when 
\[
x \in [-R-r,R-r] \qquad \text{solves} \qquad x = - \mu + \sqrt{R^2-2rx-r^2},
\]
that is, $x = -\mu - r + \sqrt{R^2 + 2\mu r}$. Concluding, 
$$
v(x,y) \leq \sqrt{R^2+2\mu r}-\mu-r = -r + \frac{\sqrt{R^2+2\mu r}-\mu}{R}\sqrt{|x+r|^2+d_N(y,y_0)^2} \qquad \text{on } \, S_R.
$$
The same inequality is also satisfied at the vertex $(-r,y_0)$. Therefore, by the comparison with cone property, 
$$
v(x,y) \leq -r + \frac{\sqrt{R^2+2\mu r}-\mu}{R}\sqrt{|x+r|^2+d_N(y,y_0)^2} \qquad \text{on } \, B_R.
$$
To conclude the proof, for $0 \leq s < r$ we choose $x=-s$ and $y=y_0$ to deduce
$$
u(-s,y_0) -(-s) \leq \left(-1+\frac{\sqrt{R^2+2\mu r}-\mu}{R}\right)(r-s).
$$
Taking $R=2r$ and letting $r \to \infty$, we infer
$$
u(-s,y_0) -(-s) \leq -\frac{\mu}{4}<0, 
$$
which implies by letting $s \to \infty$ that $\delta(-\infty,y_0) \leq - \mu/4$, a contradiction.
\end{proof}

\section{On the approximation via $p$-harmonic functions}\label{sec_pLapla}

Let $u \in C(M)$ solve $\Delta_\infty u = 0$ on $M$. In this section, we prove the relevant gradient and one-side gradient estimates for $p$-harmonic approximations of $u$ that are used in the proof of Theorem \ref{teo_unique_blowdown}. In Euclidean setting, the result is due to Evans and Smart \cite{evsm}, where it is used to infer the uniqueness of the blowup of $u$ at a given point and, consequently, the everywhere differentiability of $u$. Therein, for $\eps>0$ the authors choose to approximate $\Delta_\infty$ with the operator  
	\[
	\Delta_{\infty,\eps} \phi = \eps e^{- \frac{|\nabla \phi|^2}{2\eps}} \diver \left( e^\frac{|\nabla \phi|^2}{2\eps} \nabla \phi \right) = \Delta_\infty \phi + \eps \Delta \phi
	\]
and, for each $R>0$, the function $u$ with the solution $u_\eps$ to the problem
	\[
	\left\{ \begin{array}{ll}
	\Delta_{\infty,\eps} u_\eps =0 & \quad \text{on } \, B_R, \\[0.2cm]
	u_\eps = u & \quad \text{on } \, \partial B_R.
	\end{array}\right.
	\]
While their arguments can be adapted to the manifold setting, we prefer to approximate via the $p$-Laplace operator, in the hope that its peculiar features (notably its homogeneity) may be further exploited to get even sharper estimates in the direction of those required in \cite{evans_savin}.

Fix a smooth, relatively compact set $\Omega$, and for $p \in (1,\infty)$ consider the solution $u_p$ to 
	\begin{equation}\label{eq_ueps}
	\left\{ \begin{array}{ll}
	\Delta_p u_p \doteq \diver \left( |\nabla u_p|^{p-2} \nabla u_p \right) = 0 & \quad \text{on } \, \Omega, \\[0.2cm]
	u_p = u & \quad \text{on } \, \partial \Omega.
	\end{array}\right.
	\end{equation}
It is known by \cite{bhattacharya_dibenedetto_manfredi} and the uniqueness result in \cite{armstrong_smart,jensen} that $u_p \to u$ uniformly on $\overline{\Omega}$ as $p \to \infty$. Moreover, $u_p \in C^{1,\alpha}(\Omega)$ and it is $C^\infty$ on the open set $\{|\nabla u_p| > 0\}$. In a manifold setting, a sharp local gradient estimate for $p$-harmonic functions was obtained by Kotschwar and Ni in \cite[Thm. 1.1]{kotschwarni}, see also \cite{wangzhang}. While they were interested in the limit $p \to 1$, in our setting their estimate implies the following simpler one, which is enough for our purposes.

\begin{theorem}\label{teo_gradesti_p}
Assume that $u_p$ is $p$-harmonic on a ball $B_{2R}(o)\Subset M^m$, and that $\Sec \ge - \kappa^2$ on $B_{2R}(o)$. Then, 
\[
\sup_{B_R(o)} |\nabla \log u_p|^2 \le \frac{C_m}{(p-1)^2} \left( \frac{p^2}{R^2} + \frac{p \kappa}{R} + \kappa^2 \right),
\]
where $C_m$ only depends on $m$.
\end{theorem}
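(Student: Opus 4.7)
The plan is a one-move reduction to Kotschwar--Ni's sharp local gradient estimate \cite[Thm.\ 1.1]{kotschwarni}, followed by a routine algebraic simplification. The hypotheses match up directly: $\Sec \geq -\kappa^2$ on $B_{2R}(o)$ implies $\Ric \geq -(m-1)\kappa^2 g$ on the same ball, which is the Ricci lower bound required by the Bochner-type argument underlying their estimate. Since $u_p \in C^\infty$ on the regular set $\{|\nabla u_p|>0\}$ and is $C^{1,\alpha}$ otherwise, any pointwise gradient bound obtained on the regular set extends to $\overline{B_R(o)}$ by continuity. To guarantee that $\log u_p$ is well-defined, I would apply the estimate to the translate $u_p + C$ with $C$ large (the $p$-Laplace operator is translation invariant) and then transfer the conclusion back to $u_p$.

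Kotschwar--Ni's estimate produces, in our setting, a bound of the schematic form
\[
\sup_{B_R(o)} |\nabla \log u_p| \;\leq\; \frac{c_m}{p-1}\left(\frac{p}{R} + \kappa\right),
\]
possibly with additional intermediate cross terms arising from their particular normalisation. Squaring this inequality yields a right-hand side controlled by $\frac{c_m^2}{(p-1)^2}\bigl(\frac{p^2}{R^2} + \frac{2p\kappa}{R} + \kappa^2\bigr)$; any extra cross term (for instance of the form $\sqrt{p\kappa/R}$) is then absorbed by Young's inequality $2ab \leq a^2 + b^2$ into the same polynomial $p^2/R^2 + p\kappa/R + \kappa^2$. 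Renaming the constant then produces exactly the bound in Theorem \ref{teo_gradesti_p}.

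The only real obstacle is bookkeeping. Since the Kotschwar--Ni statement is tailored to studying the limit $p \to 1^+$, its dependence on $p$ is organised in a way whose translation to our regime (where $p\to\infty$ is the interesting limit) requires a careful but routine tracking of powers of $p$, of $p-1$ and of $\kappa$. No new geometric-analytic input --- no fresh Bochner computation, no new cutoff choice, no fresh maximum-principle argument --- is needed beyond what is already contained in \cite{kotschwarni}; the content of this theorem is essentially a rewriting of their estimate into a form that is convenient for the approximation procedure in the proof of Theorem \ref{teo_unique_blowdown}.
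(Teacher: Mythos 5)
Your overall reduction is exactly what the paper does: Theorem \ref{teo_gradesti_p} is not given a self-contained proof in the text but is derived from \cite[Thm.\ 1.1]{kotschwarni} (with \cite{wangzhang} as an alternative reference), and the Bochner formula that immediately follows in Section \ref{sec_pLapla} is recalled only to prove the \emph{one-sided} gradient estimate of Theorem \ref{teo_gradesti_oneside}, not this theorem. So the ``cite Kotschwar--Ni and reorganize the $p$-dependence'' route is the paper's own route.

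The positivity maneuver, though, does not work as you phrase it. The statement bounds $|\nabla \log u_p|$, which already forces $u_p>0$ to be part of the hypotheses (implicit, as in Kotschwar--Ni). Applying the estimate to $u_p+C$ and ``transferring the conclusion back to $u_p$'' is not legitimate: what you obtain is a bound on
\[
|\nabla \log (u_p+C)| \;=\; \frac{|\nabla u_p|}{u_p+C},
\]
which is a genuinely different quantity from $|\nabla \log u_p| = |\nabla u_p|/u_p$, and in fact tends to $0$ as $C\to\infty$. What does survive translation is a bound on $|\nabla u_p|$ alone, exactly in the form of the unnumbered corollary displayed right after Theorem \ref{teo_gradesti_p}, where the right-hand side carries a factor $u(x)$. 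It is that corollary, applied to a translated (hence positive) approximant $u_{p,j}+c$, that enters the proof of Theorem \ref{teo_unique_blowdown}, and this is precisely why the constant there is allowed to depend on $u(o)$. In short: the translation trick is appropriate one level down, for $|\nabla u_p|$, not for $|\nabla \log u_p|$; for the theorem as stated one must simply read a positivity hypothesis into it, as in \cite{kotschwarni}. A secondary imprecision: Kotschwar--Ni's estimate is proved under a sectional (not merely Ricci) curvature lower bound --- the $p$-Laplacian's anisotropic linearization and the Hessian comparison for the cutoff both want $\Sec \ge -\kappa^2$ --- so weakening to $\Ric \ge -(m-1)\kappa^2$ is not quite ``what the Bochner-type argument requires'' here.
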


As a consequence, if $R \ge 1$ and $u_p$ solves \eqref{eq_ueps} on $\Omega = B_{2R}(o)$, then for $p \ge R$ we have 
\[
|\nabla u_p(x)| \le C_m \left(1 + \kappa\right)\frac{u(x)}{R} \qquad \forall \, x \in B_R(o).
\]
Theorem \ref{teo_gradesti_p} is obtained by a careful application of the improved version of Cheng-Yau's technique to a suitable Bochner formula for the $p$-Laplacian, which we now recall. Formally differentiating the $p$-Laplacian at $u_p$ we get the linearized operator
	\[
	\phi \longmapsto \left.\frac{\di}{\di t}\right|_{t=0} \Delta_p(u_p + t \phi) = \diver\left( A(\nabla u_p) \nabla \phi \right) 
	\] 
where, for a nonzero vector $X \in T_xM$, $A(X) : T_xM \to T_xM$ is the endomorphism 
	\[
	A(X) = |X|^{p-2} \left( (p-2)\langle \frac{X}{|X|}, \cdot \rangle \frac{X}{|X|} + \mathrm{id}\right). 			\]
Notice that $A(X)$ has eigenvalues $(p-1)|X|^{p-2}$ in direction $X$ and $|X|^{p-2}$ on $X^\perp$. For each $x \in \{ |\nabla u_p|>0\}$ we consider a local orthonormal frame $\{\nu, e_j\}$ with $\nu = \nabla u_p/|\nabla u_p|$ and $\{e_j\}$, $2 \le j \le m$ tangent to the level sets of $u_p$. Then the following Bochner formula in \cite[Prop. 2.14]{mrs} holds:
\begin{equation}\label{bochner}
\begin{array}{l}
\disp \frac{1}{2} \diver \left( A(\nabla u_p) \nabla |\nabla u_p|^2 \right) = \\[0.5cm]
\qquad = \disp |\nabla u_p|^{p-2} \Big\{ (p-1)u_{\nu\nu}^2 + p \sum_j u_{\nu j}^2 + \sum_{i,j} u_{ij}^2 + \Ric(\nabla u_p, \nabla u_p)\Big\},
\end{array}
\end{equation}
where $u_{\nu\nu},u_{\nu j},u_{ij}$ are the components of $\nabla^2 u_p$. For convenience, we also consider the normalized linearization
\[
\LL_p \phi = \left.\frac{\di}{\di t}\right|_{t=0} \frac{\Delta_p(u_p + t \phi)}{|\nabla (u_p + t \phi)|^{p-2}} = |\nabla u_p|^{2-p} \diver\left( A(\nabla u_p) \nabla \phi \right).
\]
So that the above Bochner formula simplifies to
\begin{equation}\label{eq_bochner_nablaup}
\begin{array}{lcl}
\frac{1}{2}\LL_p |\nabla u_p|^2 & = & \disp (p-1) u_{\nu\nu}^2 + p \sum_j u_{\nu j}^2 + \sum_{i,j} u_{ij}^2 + \Ric(\nabla u_p, \nabla u_p) \\[0.5cm]
& = & \disp (p-2)|\nabla^2 u_p(\nu)|^2 + |\nabla^2 u_p|^2 + \Ric(\nabla u_p, \nabla u_p) \\[0.3cm]
& \ge & \disp (p-1)|\nabla^2 u_p(\nu)|^2 + \Ric(\nabla u_p, \nabla u_p).
\end{array} 
\end{equation}
We shall rewrite $\LL_p$ in trace form. Let $\{e^a\}$, $1 \le a,b \le m$ be the dual coframe of $\{e_a\}$. The components of $A(\nabla u_p)$ satisfy
\[
A^a_b = |\nabla u|^{p-2}\left[ \delta^a_b + (p-2) \frac{u^au_b}{|\nabla u|^2}\right],
\] 
and expanding $\Delta_p u_p = 0$ we get
\[
\Delta u = - (p-2) u_{\nu\nu}. 
\]
Therefore, a computation gives
\[
[\diver A(\nabla u_p)]_b e^b = A^a_{b,a}e^b = 2(p-2)|\nabla u|^{p-3} u_{\nu j} e^j.
\]
It follows that in components
\begin{equation}\label{eq_Lphi}
\begin{array}{lcl}
\LL_p \phi & = & |\nabla u|^{2-p} \left( A^a_b \phi^b\right)_a = |\nabla u|^{2-p} A^a_b \phi_a^b + 2(p-2)|\nabla u|^{-1} u_{\nu j} \phi^j \\[0.3cm]
& = & \disp \left[ \delta^a_b + (p-2) \frac{u^au_b}{|\nabla u|^2}\right]\phi^b_a + 2(p-2)|\nabla u|^{-1} u_{\nu j}  \phi^j.
\end{array}
\end{equation}
Assume that $M$ splits as $\R \times N$ with coordinates $(x,y)$ and metric $\di x^2 + g_N$, and consider the function $\partial_x u_p = \langle \nabla u_p, \partial_x \rangle$. Since $\partial_x$ is a Killing field,  
\begin{equation}\label{eq_partial_up}
\LL_p (\partial_x u_p) = 0 \quad \text{on } \ \{ |\nabla u_p|>0\}.
\end{equation}
Furthermore, by using \eqref{eq_Lphi} we get
\begin{equation}\label{eq_Lphi_x}
|\LL_p x| \le 2|p-2||\nabla u|^{-1}|\nabla^2 u(\nu)|. 
\end{equation}

We next estimate $\LL_p \varphi^2$ when $\varphi$ is a cut-off depending on the distance from a fixed point. 

\begin{lemma}\label{lem_cutoff}
Assume that $\Sec \ge -\kappa^2$ in a ball $B_{2R} \Subset M$, for some $\kappa \in \R^+_0$. Then, there exists a function $\varphi \in \lip_c(B_{2R})$ such that $0 \le \varphi \le 1$, $\varphi \equiv 1$ on $B_R$ and, for $p \ge m$,
\[
|\nabla \varphi| \le \frac{C}{R}, \qquad \LL_p \varphi^2 \ge - Cp\varphi \big(\mathcal{C}_R + R^{-1}|\nabla u|^{-1}|\nabla^2 u(\nu)|\big)
\]
in the barrier sense, where $C$ is an absolute constant and $\mathcal{C}_R \doteq R^{-2}(1+\kappa R)$.
\end{lemma}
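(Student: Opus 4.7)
The plan is to choose $\varphi$ as a radial cutoff of the distance $r(\cdot) = \di(\cdot,o)$, namely $\varphi(q) = \eta(r(q)/R)$ for a smooth non-increasing $\eta \colon [0,\infty) \to [0,1]$ that equals $1$ on $[0,1]$ and $0$ on $[2,\infty)$, with $|\eta'|,|\eta''| \le C_0$. Then $0 \le \varphi \le 1$, $\varphi \equiv 1$ on $B_R$ and $|\nabla\varphi| \le C_0/R$ follow from $|\nabla r| = 1$. Since $r$ is not smooth on the cut locus of $o$, I would work in the barrier sense via Calabi's trick: at any $q_0$, fix a minimizing unit-speed geodesic $\gamma$ from $o$ to $q_0$ and, for small $\eps>0$, set $r_\eps(q) \doteq \eps + \di(\gamma(\eps),q)$; then $r_\eps \ge r$ with equality at $q_0$, $r_\eps$ is smooth in a neighbourhood of $q_0$, and since $\eta$ is non-increasing, $w \doteq \eta(r_\eps/R)^2$ is a lower barrier for $\varphi^2$ at $q_0$.

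The heart of the proof is to evaluate $\LL_p w$ at $q_0$ via the formula \eqref{eq_Lphi}. Writing $\psi(s) = \eta(s/R)^2$ so that $w = \psi\circ r_\eps$, one has $\psi'(s) = 2R^{-1}\eta(s/R)\eta'(s/R)$ and $\psi''(s) = 2R^{-2}\bigl(\eta'(s/R)^2 + \eta(s/R)\eta''(s/R)\bigr)$. Substituting $\nabla w = \psi'(r_\eps)\nabla r_\eps$ and $\nabla^2 w = \psi''(r_\eps)\nabla r_\eps \otimes \nabla r_\eps + \psi'(r_\eps)\nabla^2 r_\eps$ into \eqref{eq_Lphi} and using the identity $\eta(r_\eps(q_0)/R) = \varphi(q_0)$, one obtains at $q_0$
\[
\LL_p w = \frac{2(\eta')^2}{R^2}\bigl[1 + (p-2)\langle\nu,\nabla r_\eps\rangle^2\bigr] + 2\varphi\, E,
\]
where $E$ collects all the terms that carry a $\varphi$ factor, namely
\[
E = \frac{\eta''}{R^2}\bigl(1+(p-2)\langle\nu,\nabla r_\eps\rangle^2\bigr) + \frac{\eta'}{R}\bigl(\Delta r_\eps + (p-2)\nabla^2 r_\eps(\nu,\nu)\bigr) + \frac{2(p-2)\eta'}{R|\nabla u|}\,u_{\nu j}(\nabla r_\eps)^j.
\]
The first summand of the display is non-negative and can simply be dropped, reducing the task to a lower bound on $E$.

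For the curvature-dependent pieces I invoke the Hessian comparison under $\Sec \ge -\kappa^2$ applied to the distance from $\gamma(\eps)$: $\nabla^2 r_\eps \le \kappa\coth\bigl(\kappa(r_\eps-\eps)\bigr)\bigl(g - \di r_\eps\otimes \di r_\eps\bigr)$, whence $\Delta r_\eps \le (m-1)\kappa\coth(\kappa(r_\eps-\eps))$ and $\nabla^2 r_\eps(\nu,\nu)\le \kappa\coth(\kappa(r_\eps-\eps))$. Since $\eta'$ is supported in $\{R\le r\le 2R\}$, on this annulus and for $\eps$ sufficiently small both quantities are majorized by $C_1(\kappa + 1/R)$ (using $\coth(x)\le 1+1/x$). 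As $\eta'\le 0$, the middle term of $E$ is then $\ge -|\eta'|R^{-1}\cdot C_1(p-1)(\kappa + 1/R)\ge -Cp\mathcal{C}_R$ for $p\ge m$, while the $\eta''$-term is trivially bounded in absolute value by $Cp\mathcal{C}_R$. For the cross term, Cauchy-Schwarz gives $|u_{\nu j}(\nabla r_\eps)^j|\le |\nabla^2 u(\nu)|$, and together with $|\eta'|\le C_0$ this yields an absolute-value bound of $CpR^{-1}|\nabla u|^{-1}|\nabla^2 u(\nu)|$. Summing these contributions yields $E \ge -Cp\bigl(\mathcal{C}_R + R^{-1}|\nabla u|^{-1}|\nabla^2 u(\nu)|\bigr)$, and multiplying by $2\varphi$ gives the claim in the barrier sense. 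The main technical point is that, thanks to taking $\varphi^2$ rather than $\varphi$, every term that can a priori have the wrong sign is multiplied by an overall $\varphi$, while the unavoidable $(\eta')^2$ contribution enters with the favourable sign and can be discarded.
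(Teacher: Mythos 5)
Your proof is correct and follows essentially the same route as the paper: a radial cutoff $\varphi = \eta(r/R)$, Calabi's trick for the barrier, the Hessian comparison under $\Sec \ge -\kappa^2$, and the computation of $\LL_p$ via \eqref{eq_Lphi}. The only organizational difference is that you expand $\LL_p(\varphi^2)$ directly and isolate the non-negative $(\eta')^2$ term, whereas the paper first bounds $\LL_p \varphi$ from below and then invokes $\LL_p(\varphi^2) \ge 2\varphi\,\LL_p\varphi$ (which relies on the second-order coefficient matrix $\delta^a_b + (p-2)u^a u_b/|\nabla u|^2$ being positive semi-definite for $p\ge 2$); the two decompositions are algebraically identical.
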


\begin{proof}
Let $\eta \in C^\infty_c([0,2))$ satisfy
\[
0 \le \eta \le 1, \quad \eta \equiv 1 \ \text{on } \, [0,1], \quad \eta' \le 0, \quad |\eta'| + |\eta''| \le C,
\] 
and let $\varphi(x) = \eta(r(x)/R)$ where $r$ is the distance from the center of the ball. Setting 
\[
{\rm tn}_{\kappa}(t) = \left\{ \begin{array}{ll}
\kappa \coth(\kappa t) & \quad \text{if } \, \kappa > 0, \\[0.2cm]
1/t & \quad \text{if } \, \kappa = 0, 
\end{array}\right.
\]
by the Hessian comparison theorem
\[
\nabla^2 r \le {\rm tn}_{\kappa}(r) \Big( \metric - \di r^2\Big)
\]
in the barrier (i.e. support) sense, see \cite[Lem. 12.2.4]{petersen}. Noting that $\eta' =0$ on $[0,1]$, we have
\begin{equation}\label{eq_hessiancomp}
\begin{array}{lcl}
\nabla^2 \varphi & = & \disp \eta''R^{-2} \di r^2 + \eta'R^{-1} \nabla^2 r \ge - CR^{-2} \di r^2 + \eta' R^{-1} {\rm tn}_\kappa(R)\big( \metric - \di r^2\big) \\[0.2cm]
 & \ge & \disp -CR^{-2}(1 + R{\rm tn}_{\kappa}(R)) \metric \ge - C\mathcal{C}_R \metric
\end{array}
\end{equation}
for suitable $C$, where we used that $R{\rm tn}_{\kappa}(R) \le 1+ \kappa R$. Whence, using \eqref{eq_Lphi} and for $p \ge m$,
\begin{equation}\label{Leps_phi}
\begin{array}{lcl}
\LL_p \varphi & = & \disp \left[ \delta^a_b + (p-2) \frac{u^au_b}{|\nabla u|^2}\right]\varphi^b_a + 2(p-2)|\nabla u|^{-1} u_{\nu j}  \varphi^j \\[0.5cm]
& = & \disp (p-1)\varphi_{\nu\nu} + \varphi_{jj} + 2(p-2)|\nabla u|^{-1} u_{\nu j}  \varphi^j \\[0.3cm]
& \ge & \disp -C \mathcal{C}_R p - 2(p-2)|\nabla u|^{-1}|\nabla^2 u(\nu)||\nabla \varphi| \\[0.3cm]
& \ge & \disp -C p\big(\mathcal{C}_R + R^{-1}|\nabla u|^{-1}|\nabla^2 u(\nu)|\big) 
\end{array}
\end{equation}
and
\begin{equation}
\LL_p \varphi^2  \ge  2 \varphi \LL_p \varphi,
\end{equation}
from which the thesis follows. 
\end{proof}

We now prove the one-side gradient estimate.

\begin{theorem}\label{teo_gradesti_oneside}
Let $M^m = \R \times N$ be a complete manifold with metric $\di x^2 + g_N$, and assume that $N$ has $\Sec \ge - \kappa^2$. Let $u_p \in C^1(\overline{B}_{2R})$ solve $\Delta_p u_p = 0$ on $B_{2R}$. Then, for
	\[
	\ell \ge R^{-1} \| u-x\|_{L^\infty(B_{2R})}, \qquad A \ge 1 + \|\nabla u_p\|_{L^\infty(B_{2R})},
	\]
and $p \ge 2$ there exists an absolute constant $C$ such that
	\[
	|\nabla u_p|^2 \le \partial_x u_p + Cm A^2 \ell^{\frac{1}{8}}(\ell+1)^{\frac{5}{4}}\left[1+ \kappa R + \frac{1}{\ell p}\right]
 \qquad \text{on } \, B_R.
	\]
\end{theorem}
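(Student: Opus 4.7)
The plan is to bound $H \doteq |\nabla u_p|^2 - \partial_x u_p$ from above by a localised maximum principle argument, exploiting three features of the splitting $M = \R \times N$. First, since $\partial_x$ is Killing, the coordinate $x$ is itself $p$-harmonic on $M$, so the weak comparison principle for $\Delta_p$ applied to the constant translates $x \pm \|u - x\|_{L^\infty(\partial B_{2R})}$ against $u_p$ yields
\[
\|u_p - x\|_{L^\infty(B_{2R})} \le \|u - x\|_{L^\infty(\partial B_{2R})} \le \ell R.
\]
Second, the same Killing invariance gives $\LL_p(\partial_x u_p) = 0$ on $\{|\nabla u_p| > 0\}$ by \eqref{eq_partial_up}, so $\LL_p H = \LL_p |\nabla u_p|^2$. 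Third, the product structure combined with $\Sec_N \ge -\kappa^2$ yields $\Ric_M(X,X) \ge -(m-2)\kappa^2 |X|^2$, which plugged into \eqref{eq_bochner_nablaup} gives
\[
\tfrac{1}{2}\LL_p H \;\ge\; (p-1)\,|\nabla^2 u_p(\nu)|^2 \;-\; (m-2)\kappa^2\,|\nabla u_p|^2.
\]

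With these ingredients I would apply the maximum principle to the test function $\Phi \doteq \varphi^{2k}(H - \sigma)_+^{q}$, where $\varphi$ is the cutoff from Lemma \ref{lem_cutoff} and $\sigma > 0$, $k, q \in \N$ are parameters to be optimised. If $\Phi \not\equiv 0$, it attains a positive maximum at some $x_0 \in B_{2R}$ where $H(x_0) > \sigma > 0$ forces $|\nabla u_p(x_0)| > 0$, so that $\LL_p$ is classically defined. Expanding $\nabla\Phi(x_0) = 0$ and $\LL_p\Phi(x_0) \le 0$ via \eqref{eq_Lphi} and the Leibniz rule for $\LL_p$, then substituting the Bochner lower bound on $\LL_p H$, the cutoff bounds $|\nabla\varphi| \le C/R$ and $\LL_p\varphi^2 \ge -C p \varphi(\mathcal{C}_R + R^{-1}|\nabla u_p|^{-1}|\nabla^2 u_p(\nu)|)$ from Lemma \ref{lem_cutoff}, and the pointwise control $H \le A^2$, should produce a closed algebraic inequality for $H(x_0)$.

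The principal technical obstacle I anticipate is absorbing the Hessian cross-terms $|\nabla u_p|^{-1}|\nabla^2 u_p(\nu)|$ that arise both in Lemma \ref{lem_cutoff} and in the cross-derivative contribution to $\LL_p(\varphi^{2k} \cdot (H-\sigma)_+^q)$, into the good positive term $(p-1)|\nabla^2 u_p(\nu)|^2$ coming from Bochner. This is done via Cauchy–Schwarz $2ab \le \eps a^2 + \eps^{-1} b^2$ with a small parameter $\eps$, paying a factor $\eps^{-1}$ on the bad side. The rational exponents $\tfrac{1}{8}$ and $\tfrac{5}{4}$ in the final bound I expect as the outcome of balancing $\eps$, the cutoff exponent $k$, the super-level exponent $q$, the parameter $\sigma$, and the $L^\infty$-bound $\|u_p - x\|_\infty \le \ell R$, the latter entering via a Caccioppoli-type estimate on $\int_{B_{2R}} \varphi^{2k}(H-\sigma)_+^{q}|\nabla u_p|^{p-2}\,\di V$ obtained by testing $\Delta_p u_p = 0$ against $\varphi^{2k}(u_p - x)(H-\sigma)_+^q$. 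The additive $(\ell p)^{-1}$ term, finally, records the natural scale $\sigma = \Theta((\ell p)^{-1})$ at which $(p-1)^{-1}$-corrections from the absorption remain subleading.

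Once the pointwise inequality $H(x_0)\,\varphi^{2k}(x_0) \le C_m A^2 \ell^{1/8}(\ell+1)^{5/4}\bigl(1 + \kappa R + (\ell p)^{-1}\bigr)$ has been established at the interior maximiser, restricting to $B_R \subset \{\varphi \equiv 1\}$ and letting $\sigma \downarrow 0$ yields the claimed estimate.
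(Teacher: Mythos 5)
Your overall plan — control $H = |\nabla u_p|^2 - \partial_x u_p$ by a localised maximum principle, feed in the Bochner inequality $\tfrac12\LL_p|\nabla u_p|^2 \ge (p-1)|\nabla^2 u_p(\nu)|^2 + \Ric(\nabla u_p,\nabla u_p)$, use the Killing invariance $\LL_p(\partial_x u_p)=0$, and absorb cutoff cross-terms into the coercive $(p-1)|\nabla^2 u_p(\nu)|^2$ term — is the correct skeleton, and your initial remark that $x$ is $p$-harmonic so that $\|u_p - x\|_{L^\infty(B_{2R})}\le \|u-x\|_{L^\infty(\partial B_{2R})}$ by comparison is a genuinely useful observation. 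However, there is a critical missing ingredient, and the substitute you gesture at does not replace it.

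The Bochner inequality alone never closes: it yields a \emph{Hessian} good term $(p-1)|\nabla^2 u_p(\nu)|^2$, which is excellent for absorbing the $|\nabla u_p|^{-1}|\nabla^2 u_p(\nu)|$ cross-terms from Lemma~\ref{lem_cutoff}, but it produces no term quadratic in $H$ itself. To extract $H^2$ on the good side of the inequality one needs a separate positive contribution, and this is the role of the auxiliary term $\beta R^{-2}(u_p-x)^2$ in the paper's test function
\[
w \;=\; \varphi^2 H^2 + \beta R^{-2}(u_p-x)^2 + \ell\,|\nabla u_p|^2.
\]
The key identity is $\langle\nu,\nabla(u_p-x)\rangle = |\nabla u_p|^{-1}\bigl(|\nabla u_p|^2-\partial_x u_p\bigr) = |\nabla u_p|^{-1}H$, so
\[
\LL_p(u_p-x)^2 \;\ge\; -2|u_p-x|\,|\LL_p x| \;+\; 2(p-1)\,|\nabla u_p|^{-2}H^2,
\]
and the term $2(p-1)\beta R^{-2}|\nabla u_p|^{-2}H^2$ is exactly what closes the estimate $E\,H^2\le B\,H + F$ at the interior maximum; the exponents $\tfrac18$ and $(\ell+1)^{\tfrac54}$ then fall out of optimising $\beta$ against the error terms, not out of a superlevel-set or Caccioppoli exponent. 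Your test function $\varphi^{2k}(H-\sigma)_+^q$ contains no such term, so $\LL_p$ applied to it at the maximiser produces only Hessian goodness and never a self-improving inequality in $H$. The Caccioppoli-style integral you propose, obtained by \emph{testing} $\Delta_p u_p=0$ against $\varphi^{2k}(u_p-x)(H-\sigma)_+^q$, is a weak/distributional estimate and does not feed into a pointwise maximum evaluation at a single interior point $x_0$; mixing the two is not coherent here. Likewise, the scale $\sigma = \Theta((\ell p)^{-1})$ does not emerge from a De Giorgi-type levelling-off; in the paper the $(\ell p)^{-1}$ arises inside the choice of $\beta$, as the contribution of the $- C_1 p A^2$ term in the coefficient $E$. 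To repair the proposal you should drop the superlevel-set ansatz and instead add $\beta R^{-2}(u_p-x)^2$ (and $\ell|\nabla u_p|^2$) to the test function, then run the maximum principle directly on that sum.
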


\begin{proof}
Let $\varphi$ be a cut-off function. For convenience, we suppress the subscript $p$ and simply write $u$. We consider 
\[
\Phi = |\nabla u|^2 - \partial_x u.
\]

Because of \eqref{eq_bochner_nablaup}, \eqref{eq_partial_up} and our assumptions on the sectional curvature, on the set $\{\Phi >0\}$ and setting $\nu = \nabla u/|\nabla u|$ it holds 
\[
\LL_p \Phi^2 \ge 2\Phi \LL_p \Phi \ge 4(p-1)\Phi \vert \nabla^2 u(\nu)\vert^2 - 4(m-1)\kappa^2 \Phi |\nabla u|^2.
\]
We compute on $\{\Phi>0\}$ the following expression:
\[
\begin{array}{lcl}
\LL_p (\varphi^2 \Phi^2) & \ge & \varphi^2 \LL_p \Phi^2 + \Phi^2 \LL_p \varphi^2 + 2(p-2) \langle \nu, \nabla \varphi^2 \rangle \langle \nu, \nabla \Phi^2 \rangle + 2\langle \nabla \varphi^2, \nabla \Phi^2 \rangle \\[0.3cm]
& \ge & \disp 4(p-1)\varphi^2 \Phi \vert \nabla^2 u(\nu)\vert^2 -4(m-1)\kappa^2\varphi^2 \Phi |\nabla u|^2  + \Phi^2 \LL_p \varphi^2 \\[0.3cm]
& & \disp - 8(p-1)\Phi \varphi |\nabla \varphi| |\langle \nu, \nabla \Phi \rangle| - 8 \Phi \varphi |\nabla \varphi| |\nabla \Phi|. 
\end{array}
\]
Notice that $\nabla \Phi = 2 \nabla^2 u(\nabla u) - \nabla^2 u(e_1)$, whence
\[
|\langle \nu, \nabla \Phi \rangle | \le 2|\nabla^2 u(\nu)| (1 + |\nabla u|), \qquad |\nabla \Phi| \le 2|\nabla^2 u| (1+|\nabla u|).
\]
Inserting into the above we get
%\[
%\Ric(\nabla u, \nabla u) = \Ric( \nabla(u-x_1), \nabla (u-x_1)) \ge - (m-1)\kappa^2|\nabla(u-x_1)|^2
%\]
\[
\begin{array}{lcl}
\LL_p (\varphi^2 \Phi^2) 
& \ge & \disp 4(p-1)\varphi^2 \Phi \vert \nabla^2 u(\nu)\vert^2 -4(m-1)\kappa^2\varphi^2 \Phi |\nabla u|^2  + \Phi^2 \LL_p \varphi^2 \\[0.3cm]
& & \disp - 16(p-1)\Phi \varphi |\nabla \varphi||\nabla^2 u(\nu)|(1 + |\nabla u|) - 16 \Phi \varphi |\nabla \varphi| |\nabla^2 u|(1+|\nabla u|) .
\end{array}
\]
On the other hand, we compute
\begin{align*}
\LL_p(u-x)^2& \ge 2(u-x)\LL_p(u-x)+2(p-1) \langle \nu,\nabla(u-x)\rangle^2\\[0.2cm]
&\ge -2 |u-x||\LL_p x| +2(p-1)|\nabla u|^{-2}\left(|\nabla u|^2-\partial_x u\right)^2 \\[0.2cm]
&\ge -4|p-2||u-x||\nabla u|^{-1}|\nabla^2 u(\nu)| +2(p-1)|\nabla u|^{-2}\Phi^2.
\end{align*}
Let us define 
\[
w \doteq \varphi^2\Phi^2+ \beta R^{-2}(u-x)^2+\ell|\nabla u|^2, 
\]
for some $\beta>0$ to be determined later. Notice that $\beta, \ell$ are invariant under the natural scaling $\metric' = R^{-2} \metric$ and $u' = u/R$. Let us assume that $w$ attains its maximum at an interior point $p_0$. If $\Phi(p_0)>0$, then
\begin{align*}
0\ge&\ \LL_p w =\LL_p(\varphi^2\Phi^2)+\beta R^{-2} \LL_p(u-x)^2+\ell \LL_p|\nabla u|^2\\[0.2cm]
\ge&\ 4(p-1)\varphi^2 \Phi \vert \nabla^2 u(\nu)\vert^2 - 4(m-1)\kappa^2 \varphi^2 \Phi |\nabla u|^2 + \Phi^2 \LL_p \varphi^2\\[0.2cm]
& - 16(p-1)\Phi \varphi |\nabla \varphi| |\nabla^2 u(\nu)|(1+|\nabla u|) - 16 \Phi \varphi |\nabla \varphi| |\nabla^2 u| (1+|\nabla u|)\\[0.2cm]
&-4|p-2|\beta R^{-2} |u-x||\nabla u|^{-1}|\nabla^2u(\nu)| +2(p-1)\beta R^{-2}|\nabla u|^{-2}\Phi^2\\[0.2cm]
&+2(p-2)\ell|\nabla^2 u(\nu)|^2 + 2 \ell |\nabla^2 u|^2 - 2(m-1)\kappa^2\ell |\nabla u|^2.
\end{align*}
Using that
\[
\begin{array}{l}
16 \Phi \varphi |\nabla \varphi| |\nabla^2 u(\nu)|(1+|\nabla u|) \le 2\varphi^2 \Phi |\nabla^2 u(\nu)|^2 + 32 \Phi |\nabla \varphi|^2 (1+ |\nabla u|)^2, \\[0.3cm]
16 \Phi \varphi |\nabla \varphi| |\nabla^2 u| (1+|\nabla u|) \le 2 \ell |\nabla^2 u|^2 + 32 \ell^{-1} \Phi^2 \varphi^2 |\nabla \varphi|^2 (1+|\nabla u|)^2, \\[0.3cm]
4\beta R^{-2} |u-x||\nabla u|^{-1}|\nabla^2u(\nu)| \le 2 \ell |\nabla^2 u(\nu)|^2 + 2 \beta^2 R^{-4}\ell^{-1} (u-x)^2 |\nabla u|^{-2},
\end{array}
\]
and the definition of $\Phi$ we get 
\begin{align*}
0\ge&\ 2(p-1)\varphi^2 \Phi |\nabla^2 u(\nu)|^2 - 4 (m-1)\kappa^2 \varphi^2 \Phi |\nabla u|^2 + \Phi^2 \LL_p \varphi^2\\[0.2cm]
&- 32(p-1) \Phi |\nabla \varphi|^2 (1+ |\nabla u|)^2 - 32 \ell^{-1} \Phi^2 \varphi^2 |\nabla \varphi|^2 (1+|\nabla u|)^2\\[0.2cm]
&- 2 \beta^2 R^{-4}\ell^{-1} (u-x)^2|\nabla u|^{-2} +2(p-1)\beta R^{-2}|\nabla u|^{-2} \Phi^2\\[0.2cm]
& - 2(m-1)\kappa^2\ell |\nabla u|^2.
\end{align*}
We hereafter denote with $C_1,C_2,\ldots$ absolute constants. Define $\varphi$ as in Lemma \ref{lem_cutoff}, so that $|\nabla \varphi|^2 \le C R^{-2}$ and, since $p \ge 2$, 
	\[
	\begin{array}{lcl}
	\LL_p \varphi^2 & \ge & \disp - Cp \varphi \mathcal{C}_R - Cp\varphi R^{-1} |\nabla u|^{-1} \vert \nabla^2 u(\nu)\vert \\[0.3cm]
	& \ge & - Cp \varphi \mathcal{C}_R - 2(p-1)\varphi^2 \Phi^{-1} |\nabla^2 u(\nu)|^2 - C_1 p \Phi R^{-2}|\nabla u|^{-2}.
	\end{array}
	\] 
Inserting into the above and multiplying by $|\nabla u|^2$ we infer
\begin{align*}
0\ge& - 4 (m-1)\kappa^2 \varphi^2 \Phi |\nabla u|^4 - Cp \varphi \Phi^2 \mathcal{C}_R|\nabla u|^2 - C_1 p \Phi^3 R^{-2} \\[0.2cm]
&- 32(p-1) \Phi |\nabla \varphi|^2 (1+ |\nabla u|)^2||\nabla u|^2 - 32 \ell^{-1} \Phi^2 \varphi^2 |\nabla \varphi|^2 (1+|\nabla u|)^2|\nabla u|^2\\[0.2cm]
&- 2 \beta^2 R^{-4}\ell^{-1} (u-x)^2 +2(p-1)\beta R^{-2} \Phi^2 - 2(m-1)\kappa^2\ell |\nabla u|^4.
\end{align*}
Using $|\varphi| \le 1$, $|\nabla \varphi| \le C_2/R$, $|u-x| \le \ell R$, $|\nabla u| \le A$ together with the inequality $\Phi^3 \le A^2 \Phi^2$, and rearranging, we obtain
\begin{align*}
0\ge& - 4 (m-1)\kappa^2 \Phi A^4 - Cp \Phi^2 \mathcal{C}_RA^2 - C_1 p A^2\Phi^2 R^{-2} \\
&- 32(p-1) \Phi C_2^2 R^{-2}A^4 - 32 \ell^{-1} \Phi^2 C_2^2R^{-2}A^4\\
&- 2 \beta^2 R^{-2}\ell + 2(p-1)\beta R^{-2} \Phi^2 - 2(m-1)\kappa^2\ell A^4 \\
= & \ \ (E\Phi^2 - B \Phi - F)R^{-2}, 
\end{align*}
where we set 
\[
\begin{array}{lcl}
E & \doteq & \disp 2(p-1)\beta -Cp R^2\mathcal{C}_R A^2 - C_1 p A^2 - 32 \ell^{-1}C_2^2 A^4\\[0.2cm]
B & \doteq & 4(m-1)\kappa^2R^2A^4 + 32(p-1)C_2^2 A^4, \\[0.2cm]
F & \doteq & \disp 2 \beta^2 \ell + 2(m-1)\kappa^2 \ell A^4 R^2.
\end{array}
\]
Whence, 
\[
E\Phi^2 \le B \Phi + F \le BA^2 + F \qquad \text{at } \, p_0.
\]
It follows that, at $p_0$,  
\[
\begin{array}{lcl}
w & \le & \disp \Phi^2 + \beta R^{-2} (u-x)^2 + \ell |\nabla u|^2 \\[0.2cm]
& \le & \disp E^{-1}(BA^2 + F) + \beta \ell^2 + \ell A^2.
\end{array}
\]
Therefore, for each $x \in B_R$, 
\begin{equation}\label{eq_estim_Phi}
\Phi^2(x) \le w(x) \le w(p_0) \le E^{-1}(BA^2 + F) + \ell(\beta \ell + A^2). 
\end{equation}
By the definition of $\mathcal{C}_R$
\[
E \ge p\beta - C_3p A^2\left[1+ \kappa R + \frac{A^2}{\ell p} \right] \ge \frac{p\beta}{2}, 
\]
where in the latter inequality we have chosen
\[
\beta = 2C_3 \left(\frac{\ell+1}{\ell}\right)^{\frac{1}{4}} A^4\left[1+ \kappa R + \ell^{-1}p^{-1}\right]. 
\]
Estimating $B,F$ for such a choice of $\beta$ we obtain:
\[
B \le C_5 mpA^4(1 + \kappa^2 R^2), \qquad F \le C_6 m \sqrt{\ell(\ell+1)}(1 + \kappa^2 R^2 + \ell^{-2}p^{-2})A^8,
\]
which gives
\[
\begin{array}{lcl}
\Phi^2(x) & \le & C_7 m A^4 \ell^{\frac{1}{4}} (\ell+1)^{\frac{3}{4}}(1 + \kappa R + \ell^{-1}p^{-1}) \\[0.2cm]
& \le & C_7 m A^4 \ell^{\frac{1}{4}} (\ell+1)^{\frac{5}{2}}(1 + \kappa R + \ell^{-1}p^{-1}) 
\end{array}
\]
%\[
%\begin{array}{lcl}
%\Phi^2(x) & \le & \disp C_3 \left(\frac{\ell+1}{\ell}\right)^{\frac{1}{4}} A^4\left[1+ \kappa R + \ell^{-1}p^{-1}\right] \cdot mpA^6(1 + \kappa^2 R^2) + C_6 m \sqrt{\ell(\ell+1)}(1 + \kappa^2 R^2 + \ell^{-2}p^{-2})A^8 \\[0.3cm]
%\Phi^2(x) & \le & \disp C_3  \cdot m A^4 \left(\frac{\ell}{\ell+1}\right)^{\frac{1}{4}}(1 + \kappa R) + C_6 m p^{-1} \left(\frac{\ell}{\ell+1}\right)^{\frac{1}{4}}\sqrt{\ell(\ell+1)}(1 + \kappa R + \ell^{-1}p^{-1})A^4 \\[0.3cm]
%\Phi^2(x) & \le & \disp C m A^4 \ell^{\frac{1}{4}} (\ell+1)^{\frac{3}{4}}(1 + \kappa R + \ell^{-1}p^{-1})
%\end{array} 
%\]
%
%
%Choose therefore
%\[
%\beta = \left[\frac{\ell+1}{\ell}\right]^{\frac{1}{2}} CA^2\left[1+ \kappa R + \frac{\eps}{\ell} \right], %\qquad \text{so that } \, E \ge \beta A^{-2}.
%\] 
%Moreover, estimating $B,F$ as follows:
%\[
%B \le C_m(1 + \eps \kappa^2 R^2), \qquad F \le C_m \ell(\beta^2 + \eps \kappa^2 R^2)
%\]
%we conclude, also using $A \ge 1$, 
%\[
%\begin{array}{lcl}
%\Phi^2(x) & \le & \disp C_m A^2 \beta^{-1}(1 + \eps \kappa^2 R^2(1+\ell) + \ell\beta^2)A^2 + \ell (\beta \ell + A^2) \\[0.3cm]
%& \le & C_m \left[\frac{\ell}{\ell+1}\right]^{\frac{1}{2}} A^6 \left(1 + \kappa R + \frac{\eps}{\ell}\right)(1+\ell) + C \ell^{\frac{3}{2}} \sqrt{\ell+1} A^2 \left(1 + \kappa R + \frac{\eps}{\ell}\right)\\[0.4cm]
%& \le & \disp C_m \sqrt{\ell}(1+\ell)^{\frac{3}{2}} A^6\left(1 + \kappa R + \frac{\eps}{\ell}\right) \\[0.4cm]
%\end{array} 
%\]
and the conclusion follows by taking square roots. If $p_0 \in \partial B_{2R}$ we have for each $x \in B_R$
\[
\begin{array}{lcl}
\Phi^2(x) & \le & \disp w(x) \le w(p_0) = \left(\beta R^{-2}(u-x)^2+\ell|\nabla u|^2\right)(p_0) \le \beta \ell^2 + \ell A^2  \\[0.2cm]
& \le & C_8 A^4 \ell^{\frac{7}{4}}(\ell+1)^{\frac{3}{4}}\left[1+ \kappa R + \ell^{-1}p^{-1}\right] \\[0.2cm]
& \le & C_8 A^4 \ell^{\frac{1}{4}}(\ell+1)^{\frac{5}{2}}\left[1+ \kappa R + \ell^{-1}p^{-1}\right],
\end{array}
\]
%\[
%\begin{array}{lcl}
%\Phi^2(x) & \le & \disp w(x) \le w(p_0) = \left(\beta R^{-2}(u-x)^2+\ell|\nabla u|^2\right)(p_0) \le \beta \ell^2 + \ell A^2  \\[0.2cm]
%& \le & C_m A^2 \ell^{\frac{3}{2}} \sqrt{\ell+1}\left(1 + \kappa R + \frac{\eps}{\ell}\right),
%\end{array}
%\]
%
from which the desired inequality follows as well. 
\end{proof}

\bibliographystyle{amsplain}

\end{document}